\documentclass[10pt,a4paper]{amsart}

\usepackage[T1]{fontenc}
\usepackage[english]{babel}
\usepackage[a4paper,top=3cm,bottom=3.5cm,left=2cm,right=2cm]{geometry}

\usepackage{amsmath,amssymb,amsthm}
\usepackage{mathtools}
\usepackage{enumitem}
\usepackage{hyperref}
\usepackage{autonum}

\numberwithin{equation}{section}

\usepackage{aliascnt}

\theoremstyle{plain}
\newtheorem{theorem}{Theorem}[section]

\newaliascnt{lemma}{theorem}
\newtheorem{lemma}[lemma]{Lemma}
\aliascntresetthe{lemma}

\newaliascnt{proposition}{theorem}
\newtheorem{proposition}[proposition]{Proposition}
\aliascntresetthe{proposition}

\newaliascnt{corollary}{theorem}
\newtheorem{corollary}[corollary]{Corollary}
\aliascntresetthe{corollary}

\newtheorem{theoremA}{Theorem}

\newtheorem{theoremB}{Theorem }

\theoremstyle{definition}
\newaliascnt{definition}{theorem}
\newtheorem{definition}[definition]{Definition}
\aliascntresetthe{definition}

\theoremstyle{remark}
\newaliascnt{remark}{theorem}
\newtheorem{remark}[remark]{Remark}
\aliascntresetthe{remark}

\newaliascnt{notation}{theorem}

\aliascntresetthe{notation}

\usepackage[nameinlink,capitalise,sort]{cleveref}

\crefname{equation}{}{}
\crefname{enumi}{}{}

\crefname{theorem}{Theorem}{Theorem}
\crefname{lemma}{Lemma}{Lemma}
\crefname{proposition}{Proposition}{Proposition}
\crefname{corollary}{Corollary}{Corollary}
\crefname{definition}{Definition}{Definition}
\crefname{remark}{Remark}{Remark}
\crefname{notation}{Notation}{Notation}
\crefname{enumi}{}{}

\crefname{theoremA}{Theorem}{Theorem}
\crefname{theoremB}{Theorem}{Theorem}

\newcommand{\R}{\mathbb{R}}

\newcommand{\T}{\mathbb{T}}
\newcommand{\Z}{\mathbb{Z}}

\newcommand{\eps}{\varepsilon}

\DeclareMathOperator{\loc}{loc}

\DeclareMathOperator{\supp}{supp}
\DeclareMathOperator{\Div}{div}

\newcommand{\dd}{\,\mathrm{d}}

\title[$L^2(\R^2)$ Well-Posedness and Logarithmic Lipschitz Regularity for the Density Patch Problem]
{$L^2(\R^2)$ Well-Posedness and Logarithmic Lipschitz Regularity for the Density Patch Problem}

\date{}

\author[A.~Violini]{Alessandro~Violini}
\address[A.~Violini]{Universit\"at Basel, Department Mathematik und Informatik, Spiegelgasse 1, 4051 Basel, Switzerland. }
\email{alessandro.violini@unibas.ch}

\subjclass[2020]{76D03, 76D05, 46E35, 35Q30}
\keywords{Inhomogeneous Navier--Stokes equations; density patch problem; vacuum; uniqueness; log-Lipschitz regularity.}

\begin{document}

\begin{abstract}
We study the density patch problem for the two-dimensional inhomogeneous incompressible Navier--Stokes system with vacuum, for initial data consisting of a Lipschitz density patch and a divergence-free velocity field in $L^2(\mathbb{R}^2)$.

We establish uniqueness of solutions at the natural energy level, thereby concluding the global well-posedness for $L^2$ data. 

Furthermore, we prove a log-Lipschitz estimate for the velocity field, extending the classical result of Chemin-Lerner to the inhomogeneous setting. As a consequence, the associated flow belongs to $L^\infty_t C_x^{1-\eps}$ for any $\eps \in (0,1)$, ensuring that the patch boundary remains a continuous curve of Hausdorff dimension $1$, thus preserving its initial dimension for all time. 

\end{abstract}
\maketitle

\section{Introduction}

In this work, we address Lions' density patch problem \cite[p.~34]{Lions1996}. 
Specifically, we consider an initial density $\rho_0 = \mathbf{1}_D$ given by the indicator function of a Lipschitz domain $D$, which represents a patch of a homogeneous incompressible fluid surrounded by vacuum. 
For such an initial configuration, it is known that there exists at least one global weak solution where the density remains a patch $\rho(t)=\mathbf{1}_{D(t)}$ for all $t \ge 0$, maintaining a constant volume ($|D(t)| = |D|$). 
The fundamental question raised by Lions is whether the geometric regularity of the initial domain $D$ is preserved by the domain $D(t)$ during the time evolution.

As observed by Lions, this problem can be naturally formulated in terms of the inhomogeneous incompressible Navier--Stokes system posed on $\R_+ \times \R^2$, namely
\begin{equation}\label{eq:INS}
\left\{
\begin{aligned}
\partial_t(\rho u) + \Div(\rho u \otimes u) + \nabla P &= \Delta u, \\
\partial_t \rho + \Div(\rho u) &= 0, \\
\Div u &= 0,
\end{aligned}
\right.
\end{equation}
where $\rho=\rho(t,x)\ge 0$ denotes the density, $u=u(t,x)\in\R^2$ the velocity field, and $P=P(t,x)$ the pressure. For simplicity, the viscosity coefficient has been normalized to~$1$.

The system is supplemented with the initial data
\begin{equation}\label{eq:initial_data}
\rho(0,x)=\rho_0(x)=\mathbf{1}_D(x), 
\qquad 
u(0,x)=u_0(x) \in L^2_\sigma(\R^2),
\end{equation}
where $D \subset \R^2$ is a bounded smooth domain and $L^2_\sigma(\R^2)$ denotes the space of weakly divergence-free vector fields in $L^2(\R^2)$.

Under the assumptions in \eqref{eq:initial_data}, Lions proved in~\cite{Lions1996} the existence of a global Leray--Hopf weak solution, namely a distributional solution of \eqref{eq:INS} satisfying the energy inequality
\begin{equation}\label{eq:energy}
\frac12 \int_{\R^2} \rho |u(t)|^2 \,\dd x
+ \int_0^t \!\!\int_{\R^2} |\nabla u|^2 \,\dd x\,\dd s
\le \frac12 \int_{\R^2} \rho_0 |u_0|^2 \,\dd x
\end{equation}
for almost every $t\ge 0$.

In the first part of this work, we investigate the problem of uniqueness of these solutions. 
To put our result into perspective, let us first recall the situation in the nondegenerate case of strictly positive density,
\begin{equation}\label{eq:initial_data1}
0 < c < \rho_0(x) < C, 
\qquad 
u_0 \in L^2_\sigma(\R^2).
\end{equation}
where the well-posedness theory has been extensively studied over the last two decades. Early contributions established global well-posedness and uniqueness under additional regularity assumptions on the initial velocity; see, for instance,
\cite{Danchin2003,Abidi2007,AbidiGuiZhang2012,DanchinMucha2012,PaicuZhangZhang2013,Danchin2025}.

A major breakthrough was achieved by Hao, Shao, Wei, and Zhang~\cite{HaoShaoWeiZhang2025}, who proved global well-posedness and uniqueness in two space dimensions at the natural energy level $u_0 \in L_\sigma^2$. This result was subsequently extended to the full Leray--Hopf class in~\cite{Skondric2025}. As a consequence, in the absence of vacuum, the uniqueness theory is by now essentially well understood.

The situation is markedly different in the presence of vacuum. Indeed, the degeneracy of the momentum equation and the lack of integrability of the velocity field in vacuum regions introduce additional difficulties.

A fundamental contribution in this direction was obtained by Danchin and Mucha~\cite{DanchinMucha2019} (see also~\cite{DanchinMuchaPiasecki2024}), who proved global well-posedness for general nonnegative densities and initial data $u_0 \in H^1(\T^2)$. The restriction to the torus was later removed in~\cite{PrangeTan2023}, where uniqueness was established for $u_0 \in H^1(\R^2)$ under additional geometric assumptions on the initial density, including the patch configuration~\eqref{eq:initial_data}. Further contributions in this direction can be found in~\cite{GancedoGarcia2018,HaoShaoWeiZhang2026}.

More recently, uniqueness at critical regularity was established in~\cite{SkondricViolini2026}, where global well-posedness for initial data of the form~\eqref{eq:initial_data} was proved under the additional assumption
\begin{align}
u_0 \in \dot{B}^0_{2,1}(\R^2).
\end{align}

In the present work, we extend global well-posedness to the natural energy space $L^2$. 
To state our main result, we first introduce the time-weighted energy functionals
\begin{align}
\label{eq:Aidef}
A_0(t,u)
&:= \|\sqrt{\rho}\,u(t)\|_{L^2}^2
+ \int_0^t \|\nabla u(s)\|_{L^2}^2 \,\dd s,
\\
A_1(t,u)
&:= t\|\nabla u(t)\|_{L^2}^2 
+ \int_0^t s\,
\|\nabla^2 u(s),\nabla P(s),\sqrt{\rho}\dot u(s)\|_{L^2}^2
\,\dd s,
\\
A_2(t,u)
&:= t^2 \|\sqrt{\rho}\dot u(t),\nabla^2 u(t)\|_{L^2}^2
+ \int_0^t s^2 \|\nabla \dot u(s)\|_{L^2}^2 \,\dd s,
\\
A_3(t,u)
&:= t^3 \|\nabla \dot u(t)\|_{L^2}^2
+ \int_0^t s^3 \|\nabla^2 \dot u(s)\|_{L^2}^2 \,\dd s,
\end{align}
where
\begin{align}
\dot u := \partial_t u + (u \cdot \nabla) u
\end{align}
denotes the material derivative. We now introduce the class of solutions for which well-posedness is established (for an overview of these solutions, we refer to \cite{CrinBaratDeNittiSkondricViolini2025}).

\begin{definition}\label{def:immstrong}
A Leray--Hopf solution $(\rho, u)$ is called an \emph{immediately strong solution} if
\begin{equation}\label{eq:ImmStrongEst}
    \sup_{i=0,1,2,3} \; \sup_{t>0} A_i(t, u) =: C_u < \infty.
\end{equation}
\end{definition}

Our first main result is the following uniqueness theorem.

\begin{theoremA}\label{thm:A}
Let $(\rho_0,u_0)$ satisfy
\begin{equation}\label{eq:initial_data2}
\rho_0(x)=\mathbf{1}_D(x), 
\qquad 
u_0 \in L^2_\sigma(\R^2),
\end{equation}
where $D \subset \R^2$ is a bounded Lipschitz domain. Then there exists at most one immediately strong solution to~\eqref{eq:INS} with initial data $(\rho_0,u_0)$.
\end{theoremA}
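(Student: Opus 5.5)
We compare two immediately strong solutions $(\rho_1,u_1,P_1)$ and $(\rho_2,u_2,P_2)$ with the same data in Lagrangian coordinates, following the strategy of Danchin--Mucha and \cite{HaoShaoWeiZhang2025,SkondricViolini2026}. The first step is to show that each velocity field generates a unique Lipschitz flow $X_i(t,y)=y+\int_0^t u_i(s,X_i(s,y))\dd s$. This requires $\nabla u_i\in L^1_{\loc}([0,\infty);L^\infty)$, which we extract from the time weights in \eqref{eq:ImmStrongEst}. By Gagliardo--Nirenberg in two dimensions, $\|\nabla u\|_{L^\infty}\lesssim \|\nabla u\|_{L^2}^{1/3}\|\nabla^2 u\|_{L^4}^{2/3}$. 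We control $\nabla^2 u$ in $L^4$ through the Stokes problem $-\Delta u+\nabla P=-\rho\dot u$, using $\|\rho\dot u\|_{L^4}\lesssim\|\sqrt\rho\dot u\|_{L^2}^{1/2}\|\nabla\dot u\|_{L^2}^{1/2}$. This inequality holds because $\rho$ is bounded and is supported on a set of fixed finite measure, so a Poincaré/Ladyzhenskaya-type bound valid on the patch applies. Inserting the weights $t^{1/2}$, $t$, $t^{3/2}$ from $A_1,A_2,A_3$ gives $\|\nabla u\|_{L^\infty}\lesssim t^{-\alpha}$ with $\alpha<1$ on bounded intervals, plus a square-integrable part in time. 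We also get the smallness $\int_0^T\|\nabla u_i\|_{L^\infty}\dd t\to0$ as $T\to0$. Since the densities are transported, $\rho_i(t)=\mathbf 1_{X_i(t)(D)}$, and in Lagrangian variables $\rho_0$ is common to both solutions.

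Next we write the equations for $v_i(t,y)=u_i(t,X_i(t,y))$. They take the form $\rho_0\partial_t v_i-\Div_{A_i}(\nabla_{A_i}v_i-P_i\,\mathrm{Id})=0$ and $\Div_{A_i}v_i=0$, where $A_i=(DX_i)^{-1}$. The differences $\delta v=v_1-v_2$ and $\delta P$ satisfy a Stokes-type system with right-hand side terms involving $\delta A$, which are controlled by $\int_0^t\|\nabla\delta v\|\,\dd s$ times the regularity of $v_2$. The divergence constraint is perturbed: $\Div\delta v=\Div\big((\mathrm{Id}-A_1)\delta v\big)+\delta A:\nabla v_2$. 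We correct it with a Bogovskii or Riesz-type lift $w$ solving $\Div w=$ the perturbation on all of $\R^2$, which is harmless because vacuum lies outside the patch. We then test with $\delta v-w$ and obtain the energy-type estimate
\[
\|\sqrt{\rho_0}\delta v(t)\|_{L^2}^2+\int_0^t\|\nabla\delta v\|_{L^2}^2
\lesssim \Big(\int_0^t\|\nabla u_2\|_{L^\infty}\Big)\int_0^t\|\nabla\delta v\|_{L^2}^2+\int_0^t g(s)\,\big\|\textstyle\int_0^s\nabla\delta v\big\|_{L^2}^2\dd s .
\]
The last term is handled with the weighted Hardy-type inequality $\|\int_0^s f\|\le s^{1/2}(\int_0^s\|f\|^2)^{1/2}$. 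Here $g(s)$ collects quantities such as $\|\partial_t v_2\|_{L^2}^2$ and $\|\nabla^2 v_2\|$, which behave like $s^{-2}$ times bounded quantities from $A_2$. The product $s\,g(s)$ must be integrable near $0$ after exploiting the extra smallness from $A_0$ and $A_1$.

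The main obstacle is this borderline time singularity. At the $L^2$ level, $\|\sqrt\rho\,\partial_t u_2\|\sim t^{-1}$ only, so $s\,g(s)\sim s^{-1}$, which is not integrable. To overcome this, I would first try the duality approach of \cite{HaoShaoWeiZhang2025}: estimate $\delta v$ in the weaker norm $L^\infty_tH^{-1}$ (or $\dot H^{-\delta}$) by solving a backward linearized Lagrangian Stokes problem with vacuum. The lack of coercivity of $\rho_0\partial_t$ outside the patch is the new difficulty. I would handle it by noting that in the vacuum region the dual equation is a pure stationary Stokes problem, and by using the Lipschitz regularity of $\partial D$ to obtain $H^{1}$ extension and trace bounds for the patch. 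The gain of one derivative in the dual problem produces a weight $t^{1/2}$ that restores integrability. The resulting Gronwall inequality has a coefficient that is integrable near $0$ and small on $[0,T]$, which forces $\delta v\equiv0$ on $[0,T]$. A continuation argument, applied on intervals away from $0$ where both solutions are strong and the standard Lagrangian uniqueness applies, then gives $\delta v\equiv 0$ for all times. Hence $X_1=X_2$, $\rho_1=\rho_2$ and $u_1=u_2$.
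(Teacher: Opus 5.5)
\textbf{There is a genuine gap, in your first step.} The bounds in \eqref{eq:ImmStrongEst} are scale-invariant, so they cannot yield $\|\nabla u\|_{L^\infty}\lesssim t^{-\alpha}$ with $\alpha<1$. Feed $\|\nabla u\|_{L^2}\lesssim t^{-1/2}$, $\|\sqrt\rho\,\dot u\|_{L^2}\lesssim t^{-1}$ and $\|\nabla\dot u\|_{L^2}\lesssim t^{-3/2}$ into your own chain. You get $\|\nabla^2u\|_{L^4}\lesssim t^{-5/4}$, and hence exactly $\|\nabla u\|_{L^\infty}\lesssim t^{-1/6}\,t^{-5/6}=t^{-1}$.

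In integrated form the best available bound is $\sqrt t\,\nabla u\in L^2_tL^\infty_x$, i.e.\ $\int_s^t\|\nabla u\|_{L^\infty}\,d\tau\lesssim\sqrt{\log(t/s)}$, which blows up as $s\to0$. This is sharp: already for the heat equation there are $L^2$ data with $\nabla u\notin L^1_{\loc}([0,\infty);L^\infty)$, as the paper recalls. Consequently:
\begin{itemize}
\item there is no Lipschitz flow issued from $t=0$;
\item there is no smallness of $\int_0^T\|\nabla u_2\|_{L^\infty}$ with which to absorb the first term of your energy inequality;
\item there is no control of $A_i-\mathrm{Id}$ or of $\delta A$.
\end{itemize}
The Lagrangian framework you describe is the one of the $H^1$ or $\dot B^0_{2,1}$ theory, and it does not reach $L^2$ data. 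Your proposed rescue, a backward Lagrangian Stokes dual problem with vacuum that gains $t^{1/2}$, is set in the same coordinates and is only asserted, not derived.

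A smaller point: $\|\rho\dot u\|_{L^4}\lesssim\|\sqrt\rho\,\dot u\|_{L^2}^{1/2}\|\nabla\dot u\|_{L^2}^{1/2}$ does not follow from $|X(t,D)|=|D|$ alone. Sets of fixed measure can be arbitrarily thin or spread out, so some geometric control of $X(t,D)$ is needed.

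\textbf{The missing idea is to accept the non-Lipschitz flow and quantify its loss.} The paper stays Eulerian and writes the relative energy inequality between $u$ and the solution with mollified data.
\begin{itemize}
\item It bounds $\int\rho_1\,\delta u\otimes\delta u:\nabla u_2$ through the $L^4$ estimate on the moving patch in \cref{lem:cruc}. That estimate comes from the parabolic displacement bound $|X(t,x)-x|\lesssim\sqrt t$ together with the fractional maximal function, and it replaces Ladyzhenskaya's inequality.
\item It bounds the density-mismatch term $\int\delta\rho\,\dot u_2\cdot\delta u$ by duality for the transport equation of $\delta\rho$ (\cref{prop:duality}). There only the flows $X_2(t,s,\cdot)$ with $0<s<t$ appear, and $\sqrt t\,\nabla u_2\in L^2_tL^\infty_x$ gives $\|\nabla X_2(t,s,\cdot)\|_{L^\infty}\le \exp\bigl(M\sqrt{\log(t/s)}\bigr)$.
\end{itemize}
This loss is harmless because $f\mapsto\frac1t\int_0^t e^{M\sqrt{\log(t/s)}}f(s)\,ds$ is bounded on $L^p(0,\infty)$ for $p>1$ (\cref{lem:weighted}). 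It still propagates the displacement bound $|X_2(t,s,X_1(s,x))-x|\lesssim\sqrt t$ (\cref{prop:composedflow}), which is what is needed to localize $\delta u(t)$ on the transported patches. The resulting Gronwall coefficient is integrable near $t=0$ thanks to the time weights in $A_1,A_2,A_3$.
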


Combined with the existence theory established in~\cite{SkondricViolini2026}, this yields global well-posedness in the natural energy class. For convenience, we recall the corresponding existence result.

\begin{theorem}[{\cite[Theorem 1.2, Section 3]{SkondricViolini2026}}]\label{thm:existL2}
Let $(\rho_0,u_0)$ be as in~\eqref{eq:initial_data2}. Then there exists an immediately strong solution $(\rho,u)$ to~\eqref{eq:INS}. This solution satisfies 
\begin{enumerate}[label=\roman*)]
\item $\rho u \in C([0,\infty);L^2(\R^2))$;
\item $\rho \in C([0,\infty);L^p_{\loc}(\R^2))$ for every $p<\infty$;
\item the energy identity~\eqref{eq:energy} holds with equality.
\end{enumerate}

Moreover, any immediately strong solution $(\bar\rho,\bar u)$ with initial data~\eqref{eq:initial_data2} admits an associated flow map $\bar X(t,x)$ satisfying the following parabolic displacement\footnote{The parabolic displacement holds true for any immediately strong solution with initial data \eqref{eq:initial_data2}, not only for the one constructed in \cite[Theorem 1.2]{SkondricViolini2026}.}
\begin{equation}\label{eq:dispBound}
|\bar X(t,x)-x|
\le C\sqrt{t}
\end{equation}
for almost every $x \in D$ and almost every $t \in (0,T)$, where the constant $C>0$ depends only on $C_{\bar u}, T$ and $D$.
\end{theorem}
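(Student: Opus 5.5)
The plan is in two parts. First I construct the solution by compactness from regularized data; then I derive the displacement bound \eqref{eq:dispBound} directly from the functionals $A_i$, so that it applies to any immediately strong solution.

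\emph{Construction.} I would approximate $u_0$ by a sequence $u_0^n\in L^2_\sigma(\R^2)\cap \dot B^0_{2,1}(\R^2)$ (for instance, frequency truncations) with $u_0^n\to u_0$ in $L^2$. For each $n$, the well-posedness theory at critical regularity recalled in the introduction (global well-posedness for patch data with $u_0\in\dot B^0_{2,1}$) gives a global solution $(\rho^n,u^n)$ with $\rho^n=\mathbf 1_{D^n(t)}$ and $|D^n(t)|=|D|$.

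The core of the argument is to prove that $\sup_t A_i(t,u^n)\le C(\|u_0\|_{L^2},D)$ for $i=0,1,2,3$, uniformly in $n$. I would proceed in the following order:
\begin{itemize}[label=\textbullet]
\item $A_0$ comes from the energy identity.
\item $A_1$ comes from testing the momentum equation with $t\,\dot u$ and using the Stokes estimate $\|\nabla^2u,\nabla P\|_{L^2}\lesssim\|\rho\dot u\|_{L^2}$.
\item $A_2$ and $A_3$ come from applying the material derivative to the equation and testing with $t^2\dot u$, and then with $t^3\partial_t\dot u$ or $-t^3\Delta\dot u$.
\end{itemize}
The nonlinear terms are controlled by two-dimensional interpolation. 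The point specific to vacuum is that $\|u\|_{L^p}$ is not available from $\|\sqrt\rho u\|_{L^2}$ alone. I would replace it by a Poincaré/Ladyzhenskaya-type inequality valid on a set of fixed measure,
\begin{align}
\|u\|_{L^p(B_R)}\lesssim_{R,|D|}\|\sqrt\rho u\|_{L^2}+\|\nabla u\|_{L^2},
\end{align}
where $R$ is chosen large enough that the ball $B_R$ contains the patch. This choice is only available on bounded time intervals, which is why the constant in \eqref{eq:dispBound} depends on $T$.

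I expect this uniform-in-$n$ weighted hierarchy at the bare $L^2$ level to be the main obstacle. The hardest terms are those in the $A_2$ estimate involving $\nabla u\cdot\nabla u$ and $\dot u$. There I would first try to close with a Grönwall argument in the weight $t$, using that the time integrals supplied by the lower levels $A_0$ and $A_1$ are finite, which removes any need for smallness of the data.

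\emph{Passage to the limit.} The uniform bounds give compactness: $\nabla u^n$ is bounded in $L^2_{t,x}$, and the $A_i$ give equicontinuity of $\rho^nu^n$ in a negative Sobolev space. Using the Aubin--Lions lemma and DiPerna--Lions renormalization for the transport equation, with $u\in L^1_{\loc}(W^{1,\infty-})$ for $t>0$, I get $\rho^n\to\rho=\mathbf 1_{D(t)}$ strongly in $C([0,T];L^p_{\loc})$ and $\rho^nu^n\to\rho u$ strongly. The bounds on the $A_i$ pass to the limit by lower semicontinuity. Properties i)–iii) follow from the strong convergence together with the energy identity for each $n$ and $u_0^n\to u_0$. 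Continuity of $\rho u$ at $t=0$ in $L^2$ is obtained from the energy equality combined with weak continuity.

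\emph{Displacement bound.} Let $(\bar\rho,\bar u)$ be any immediately strong solution. For $t>0$ the bound on $A_2$ gives $\|\nabla^2\bar u(s)\|_{L^2}\le C s^{-1}$. Take a cutoff $\chi$ equal to $1$ on a ball $B_R$ containing all trajectories up to time $T$; this is justified a posteriori by a continuity argument. The two-dimensional Gagliardo--Nirenberg inequality then gives
\begin{align}
\|\bar u(s)\|_{L^\infty(B_R)}\lesssim\|\chi\bar u\|_{L^2}^{1/2}\|\nabla^2(\chi\bar u)\|_{L^2}^{1/2}+\|\chi\bar u\|_{L^2}\le C\,s^{-1/2}.
\end{align}
Here $\|\chi\bar u\|_{L^2}$ is bounded by the local $L^2$ bound above, together with $A_0$ and $A_1$.

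Since $\bar u(s)\in W^{1,p}$ for $s>0$, the flow $\bar X$ is well defined on $(0,T]$, and it extends to $t=0$ because the bound on $\|\bar u(s)\|_{L^\infty}$ is integrable at $s=0$. Integrating $\partial_t\bar X=\bar u(t,\bar X)$ then yields
\begin{align}
|\bar X(t,x)-x|\le\int_0^t C s^{-1/2}\dd s=2C\sqrt t,
\end{align}
with $C$ depending only on $C_{\bar u}$, $T$ and $D$, which is \eqref{eq:dispBound}.
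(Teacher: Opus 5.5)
The paper does not prove this theorem itself; it imports it from \cite{SkondricViolini2026}. Your argument for the displacement bound \eqref{eq:dispBound}, which is the part that must hold for \emph{every} immediately strong solution, has a genuine gap: it does not produce the rate $\sqrt t$.

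You claim that $\|\chi\bar u(s)\|_{L^2}$ is bounded uniformly in $s$ by the Poincar\'e inequality on $B_R$ together with $A_0$ and $A_1$. That inequality gives $\|\chi\bar u(s)\|_{L^2}\lesssim\|\sqrt{\bar\rho}\,\bar u(s)\|_{L^2}+\|\nabla\bar u(s)\|_{L^2}$. Pointwise in time, the functionals only give $\|\nabla\bar u(s)\|_{L^2}\le C_{\bar u}^{1/2}s^{-1/2}$, since $A_0$ controls $\nabla\bar u$ only in $L^2_t$. For general $L^2$ data this quantity does blow up as $s\to0$. Inserting this into your Gagliardo--Nirenberg step gives
\[
\|\bar u(s)\|_{L^\infty(B_R)}\lesssim s^{-1/4}\cdot s^{-1/2}=s^{-3/4},
\]
hence only $|\bar X(t,x)-x|\lesssim t^{1/4}$.

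The problem is structural. The fixed length $R$ breaks the parabolic scaling: Poincar\'e on $B_R$ turns $\|\nabla\bar u(s)\|_{L^2}\sim s^{-1/2}$ into an $L^2$ bound with an $O(1)$ prefactor instead of $s^{1/2}$. Even a Brezis--Gallou\"et-type bound on $B_R$ still loses a factor $\sqrt{\log(1/s)}$.

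The exponent is not cosmetic. In \cref{prop:composedflow} the Lipschitz bound $\sqrt{t/s}$ for $X_2(t,s,\cdot)$ is multiplied by $|X_1(s,x)-X_2(s,x)|\lesssim\sqrt s$. A displacement of order $s^{1/4}$ would instead give $t^{1/2}s^{-1/4}$, which blows up as $s\to0$.

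What is missing is localization at the parabolic scale around the fixed domain $D$, as in \cref{lem:Lpstrip}. On $D_{R,s}=D+B_{R\sqrt s}(0)$ one has
\[
\|v\|_{L^2(D_{R,s})}\lesssim s^{1/2}\|\nabla v\|_{L^2}+\|\bar\rho(s)v\|_{L^2},\qquad \|v\|_{L^\infty(D_{R,s})}\lesssim s^{1/4}\|\nabla v\|_{L^4}+s^{-1/2}\|\bar\rho(s)v\|_{L^2},
\]
so the gradient now carries the compensating power of $s$. With Ladyzhenskaya, $A_1$ and $A_2$ this gives $s^{1/4}\|\nabla\bar u(s)\|_{L^4}\lesssim s^{1/4}\|\nabla\bar u\|_{L^2}^{1/2}\|\nabla^2\bar u\|_{L^2}^{1/2}\lesssim s^{-1/2}$. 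Hence $\|\bar u(s)\|_{L^\infty(D_{R,s})}\lesssim s^{-1/2}$, which integrates to $\sqrt t$.

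These strip inequalities presuppose that the patch at time $s$ is $O(\sqrt s)$-close to $D$. If it were not, they fail for small $s$: take $v\equiv1$ on $D$ vanishing on a far-away patch. So they must be run inside a bootstrap on the trajectories at scale $\sqrt s$, not a continuity argument on a fixed ball. This is also where the geometry of $D$ enters, whereas your argument uses only $|D|$.

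Two secondary points:
\begin{itemize}
\item The uniform hierarchy for $A_0,\dots,A_3$ in the presence of vacuum, which you only announce, is the actual substance of the existence part in \cite{SkondricViolini2026}.
\item Property iii) does not follow from strong convergence of $\rho^nu^n$ alone, because $\int_0^t\|\nabla u^n\|_{L^2}^2$ is only lower semicontinuous. You need the regularity of the limit on $[\eps,t]$ together with continuity of $\rho u$ at $t=0$.
\end{itemize}
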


In the second part of this work, we investigate the geometric evolution of the density patch. 
Since the density is transported by the flow associated with the velocity field $u$, namely the map $X(t,x)$ solving
\begin{align}
\frac{d}{dt}X(t,x)=u(t,X(t,x)),
\qquad 
X(0,x)=x,
\end{align}
one has
\begin{align}
\rho(t,x)=\mathbf{1}_{D(t)}(x),
\qquad 
D(t):=X(t,D).
\end{align}
Therefore, the regularity of the evolving patch $D(t)$ is entirely determined by the regularity of the flow map~$X$.

The first global propagation result in this direction was obtained by Danchin and Mucha~\cite{DanchinMucha2019}, who proved that if $u_0 \in H^1(\T^2)$, then the associated flow satisfies
\begin{align}
X \in C^{1,\gamma},
\qquad 
\gamma\in(0,1),
\end{align}
and consequently the $C^{1,\gamma}$ regularity of the patch is preserved for all times. This result was later extended in~\cite{GancedoGarciaJuarezLunaVelasco2025} to initial data $u_0 \in H^s(\R^2)$ with $s>\gamma$, and was recently improved in~\cite{SkondricViolini2026}, where the endpoint space $u_0 \in \dot B^\gamma_{2,1}(\R^2)$ was reached.

At the critical level $\gamma=0$, it was further shown in~\cite{SkondricViolini2026} that if $u_0 \in \dot B^0_{2,1}(\R^2)$ and $D$ is a bounded Lipschitz domain, then the corresponding velocity field satisfies
\begin{equation}\label{eq:Lip}
\int_0^\infty
\|\nabla u(t)\|_{L^\infty(\R^2)}
\,\dd t
<\infty.
\end{equation}
As a consequence, the associated flow is uniformly Lipschitz in time, and therefore the Lipschitz regularity of the patch is propagated globally.

On the other hand, if one only assumes $u_0 \in L^2(\R^2)$, the estimate~\eqref{eq:Lip} is no longer expected to hold. Indeed, already for the two-dimensional heat equation, one can construct solutions with initial data in $L^2$ for which
\begin{align}
\nabla u
\notin
L^1_{\loc}([0,\infty);L^\infty(\R^2)).
\end{align}
Nevertheless, solutions to the heat equation with $L^2$ initial data still satisfy a weaker log-Lipschitz estimate of the form
\begin{equation}\label{eq:logLip}
|u(t,x)-u(t,y)|
\le
\gamma(t)\,
|x-y|\,
\bigl(-\log|x-y|\bigr)^{1-\eta},
\end{equation}
for $|x-y|<1$, where $\eta\in(0,\tfrac12)$ and $\gamma \in L^1_{\loc}([0,\infty))$.

For the homogeneous Navier--Stokes equations, such an estimate was established by Chemin and Lerner in~\cite{CheminLerner1995}; see also~\cite[Theorem~5.43]{BCD2011}. More precisely, they proved~\eqref{eq:logLip} for Leray--Hopf solutions arising from divergence-free initial data $u_0 \in L^2(\R^2)$.

Our next result extends this log-Lipschitz regularity to the inhomogeneous setting, both in the presence and absence of vacuum.

\begin{theoremB}\label{thm:B}
Assume one of the following two scenarios:
\begin{enumerate}[label=\roman*)]
\item $(\rho,u)$ is the unique Leray--Hopf solution corresponding to strictly positive density as in~\eqref{eq:initial_data1};
\item $(\rho,u)$ is the unique immediately strong solution corresponding to~\eqref{eq:initial_data2}.
\end{enumerate}
Then, for every $\eta\in(0,\tfrac12)$, there exists
\begin{align}
\gamma \in L^1_{\loc}([0,\infty))
\end{align}
such that, for almost every $t>0$ and every $x,y\in\R^2$ with $0<|x-y|<1$,
\begin{align}\label{eq:logLip2}
|u(t,x)-u(t,y)|
\le
\gamma(t)\,
|x-y|\,
\bigl(-\log|x-y|\bigr)^{1-\eta}.
\end{align}
\end{theoremB}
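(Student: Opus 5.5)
Following Chemin--Lerner (see \cite[Theorem~5.43]{BCD2011}), the plan is to control the low-frequency part of $u$ by the energy and the high-frequency part by the time-weighted smoothing estimates, where the time weight is what creates the logarithmic loss. The starting point is the elementary bound: for any $N\ge 1$ and $|x-y|<1$,
\begin{align}
|u(t,x)-u(t,y)|\le |x-y|\sum_{q\le N}\|\nabla \Delta_q u(t)\|_{L^\infty}+2\sum_{q>N}\|\Delta_q u(t)\|_{L^\infty}.
\end{align}
We choose $N\approx -\log_2|x-y|$. Then we must show two things. First, $\sum_{q\le N}\|\nabla\Delta_q u\|_{L^\infty}\lesssim \gamma(t) N^{1-\eta}$. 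Second, the tail is bounded by $\gamma(t)2^{-N}N^{1-\eta}$. The second follows from the first applied to $\|\nabla\Delta_q u\|_{L^\infty}$ for $q>N$, provided we have a frequency-localized bound $\|\nabla\Delta_q u(t)\|_{L^\infty}\le f_q(t)$ with $\sum_q f_q$ controlled in a suitable $\ell^r$ sense.

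Next we split $u=u_L+u_P$. Here $u_L=e^{t\Delta}u_0$ is the free heat part, and $u_P$ solves
\begin{align}
\partial_t u_P-\Delta u_P=-\mathbb{P}\big((1-\rho)\partial_t u+\rho u\cdot\nabla u\big)=:F.
\end{align}
Equivalently, $\Delta u-\nabla P=\rho\dot u$, so we write $F=(1-\rho)u_t+\rho\dot u-\dots$. In the vacuum case it is cleaner to use the elliptic identity
\begin{align}
-\Delta u+\nabla P=-\rho\dot u+\partial_t u\,(\text{on all of }\R^2),
\end{align}
with $\rho=0$ outside $D(t)$ and the Stokes system driven by $\partial_t u - \rho\dot u$.

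For $u_L$ the classical estimate gives $\|\nabla\Delta_q u_L(t)\|_{L^\infty}\lesssim 2^{2q}e^{-ct2^{2q}}\|\Delta_q u_0\|_{L^2}$. Hence
\begin{align}
\int_0^T\sum_{q\le N}\|\nabla\Delta_q u_L\|_{L^\infty}\,\dd t\lesssim \sum_{q\le N}\|\Delta_q u_0\|_{L^2}\lesssim N^{1/2}\|u_0\|_{L^2}.
\end{align}
To get a pointwise-in-time $\gamma$, we proceed exactly as in Chemin--Lerner. We set $\gamma(t)=\sup_N N^{\eta-1}\sum_{q\le N}(\cdots)$ and bound $\|\gamma\|_{L^1}$ by Hölder in $q$. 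The weight $N^{\eta-1}$ with $\eta<1/2$ makes $\sum_N N^{\eta-1}\cdot(\text{square summable})$ converge.

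For the forcing part we use the immediately strong bounds $A_1,A_2$ from Definition~\ref{def:immstrong}. They give $\sqrt t\,\rho\dot u\in L^2_tL^2_x$ and $t\nabla\dot u\in L^2_tL^2_x$. By Bernstein,
\begin{align}
\|\nabla\Delta_q u_P\|_{L^\infty}\lesssim 2^{q}\|\nabla^2\Delta_q u_P\|_{L^2}\cdot 2^{-q}2^{q}.
\end{align}
More usefully, $\|\nabla u_P\|_{L^\infty}\lesssim \|\nabla u_P\|_{L^2}^{1/2}\|\nabla^3 u_P\|_{L^2}^{1/2}$ can be used on the high-regularity part away from $t=0$. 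The standard route is to estimate $\nabla^2u$ in $L^p$ via the Stokes problem with right-hand side $\rho\dot u$: $\|\nabla^2 u\|_{L^p}\lesssim\|\rho\dot u\|_{L^p}$ for $p>2$. Gagliardo--Nirenberg then gives $\|\rho\dot u\|_{L^p}\lesssim\|\sqrt\rho\dot u\|_{L^2}^{2/p}\|\nabla\dot u\|_{L^2}^{1-2/p}$, which holds even with vacuum because $\rho$ is a patch of fixed measure. This produces $\|\nabla^2u(t)\|_{L^p}\lesssim t^{-1+1/p}\times(\text{$L^2_t$-integrable weight})$ near $t=0$. Consequently $\|\nabla\Delta_q u\|_{L^\infty}\lesssim 2^{-q(1-2/p)}\|\nabla^2 u\|_{L^p}$ for high $q$. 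For low $q$ we use $2^{q}\|\Delta_q\nabla u\|_{L^2}$, controlled by $A_0$.

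Combining the two regimes, we split $q$ at $2^{2q}\approx 1/t$, mimicking the heat kernel. For $2^{2q}t\le 1$ we use $\|\nabla u\|_{L^2}$, giving $\sum_{q\le \min(N,\frac12\log_2 t^{-1})}2^q\|\Delta_q\nabla u\|_{L^2}$. For $2^{2q}t>1$ we use the $L^p$ bound on $\nabla^2 u$, whose time singularity is compensated exactly by the frequency restriction. This last step is the main obstacle. The low-frequency piece only gives $\|\nabla u\|_{L^2}\in L^2_t$, not $L^1$-type control with an $N^{1/2}$ loss. To get it, we must exploit square summability over $q$ of $\int_0^T 2^{2q}\|\Delta_q u\|_{L^2}^2\,\dd t$. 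The relevant bound is $\int\sum_q 2^{2q}\|\Delta_q\nabla u\|^2$ essentially with one more derivative, which holds via the localized energy inequality for $\Delta_q u$. In the vacuum case, however, $\Delta_q$ does not commute with $\rho$, and the energy inequality is only for $\sqrt\rho u$. So I would replace the Littlewood--Paley energy estimate by the $L^2$-maximal regularity of the heat equation for $u$ with forcing $F=\Delta u-\partial_t u=\nabla P+\rho\dot u-\partial_t u$, wait—more simply $F=(1-\rho)(\partial_tu)$ plus terms. I would expect to need the parabolic displacement bound \eqref{eq:dispBound} and the uniqueness from Theorem~\ref{thm:A} to identify the solution with the one having these bounds, and then to treat $\partial_t u$ in the vacuum region through $\dot u$ and $u\cdot\nabla u$ using $A_1$–$A_3$.
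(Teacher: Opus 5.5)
There is a genuine gap: the proposal never produces an integrable $\gamma$. The estimate that carries the whole theorem, $\sum_{q\le N}\|\nabla\Delta_q u(t)\|_{L^\infty}\lesssim\gamma(t)N^{1-\eta}$ together with the matching tail bound, is called ``the main obstacle'' in your last paragraph and left open. The remedy you sketch does not close it.

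In Chemin--Lerner the needed input, $\int_0^T\|\Delta_q\nabla u\|_{L^\infty}\,\dd t\le c_q$ with $(c_q)\in\ell^2$, comes from estimates on the pieces $\Delta_q u$ of the solution. Here $\Delta_q$ does not commute with $\rho$, and the only energy identity is for $\sqrt{\rho}\,u$. The heat reformulation with forcing $\mathbb{P}\bigl((1-\rho)\partial_t u-\rho\,u\cdot\nabla u\bigr)$ is circular on the vacuum region: there the forcing contains $\partial_t u$ itself, and neither $\partial_t u$ nor $\dot u$ is controlled in $L^2(\R^2)$ (only $\sqrt{\rho}\,\dot u$ and $\nabla\dot u$, with time weights).

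Two auxiliary claims are also off. First, the momentum equation gives $-\Delta u+\nabla P=-\rho\dot u$ on $\R^2$, with no extra $\partial_t u$. Second, a Gagliardo--Nirenberg bound for $\rho\dot u$ does not follow from $|D(t)|=|D|$ alone. It needs lower-order terms and constants depending on the geometry of $D(t)$, which is what \cref{lem:cruc} and \cref{lem:Lpstrip} provide via \eqref{eq:dispBound}.

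The paper avoids localizing the solution and localizes the datum instead. Real interpolation $L^2=[\dot H^{-s},\dot H^s]_{1/2,2}$ gives $u_0=\sum_j u_{j,0}$ with $(c_j)\in\ell^2$. Each atom is evolved by the linearized system \eqref{eq:linearizej}, which is linear in the datum with $(\rho,u)$ frozen, so no commutator with $\rho$ appears. The two-sided decay \eqref{eq:L2jdecays}, \eqref{eq:Linftyjdecays}, with a time split at $\tau=2^{-j/s}$, yields \eqref{eq:unified}; this is the exact substitute for the Chemin--Lerner input. Then $\gamma=C(A+B)$ is integrable by Cauchy--Schwarz in $j$ because $\eta<\frac12$.

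Note, however, that your diagnosis was too pessimistic: the time-weighted bounds you list already suffice, with no square-summability in frequency. Let $H(t):=\|\nabla u(t)\|_{L^2}+\sqrt{t}\,\|\nabla u(t)\|_{L^\infty}$. This lies in $L^2(0,T)$ in both scenarios: for ii) it follows from $A_0$--$A_2$ and \eqref{eq:Lipnorm} exactly as in the proof of \cref{prop:composedflow}, and for i) it is the bound of Hao--Shao--Wei--Zhang noted in the introduction. For $r=|x-y|\le\sqrt t$ use the Lipschitz bound. For $r>\sqrt t$ use Morrey together with $\|\nabla u\|_{L^p}\le\|\nabla u\|_{L^2}^{2/p}\|\nabla u\|_{L^\infty}^{1-2/p}$, $p>2$. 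This gives $|u(t,x)-u(t,y)|\lesssim H(t)\,r\,t^{-1/2}\min\{1,(t/r^2)^{1/p}\}$.

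Now $-\log r\gtrsim\log(2+1/t)$ whenever $r\le\min\{t^{1/4},\frac12\}$, while $\min\{1,(t/r^2)^{1/p}\}\le t^{1/(2p)}$ when $r\ge t^{1/4}$. Hence the bound is $\lesssim\gamma(t)\,r\,(-\log r)^{1-\eta}$ for $0<r\le\frac12$, with $\gamma(t):=t^{-1/2}\bigl(\log(2+1/t)\bigr)^{\eta-1}H(t)$. Moreover $\|\gamma\|_{L^1(0,T)}\le\|H\|_{L^2(0,T)}\bigl(\int_0^T t^{-1}(\log(2+1/t))^{2\eta-2}\,\dd t\bigr)^{1/2}<\infty$ precisely because $\eta<\frac12$. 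The logarithmic loss is carried by a time weight instead of a sum over dyadic blocks. (For $r$ close to $1$ the factor $(-\log r)^{1-\eta}$ degenerates, so some restriction such as $r\le\frac12$ is needed in any case; the paper's choice $N=\lfloor-\log_2|x-y|\rfloor$ implicitly uses it too.)
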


In contrast to~\cite{CheminLerner1995} (see also \cite[Theorem 5.43]{BCD2011}), our proof does not rely on frequency localization, but is instead based on an atomic decomposition of the initial data. Although we do not pursue this direction here, the argument appears sufficiently robust to be applied in settings where the Fourier approach is not available.

The regularity of $u$, combined with the results of Chemin and Lerner \cite{CheminLerner1995}, yields the following dimensional bound for the advected patch.

\begin{corollary}\label{cor:Hdim}
    Let $(\rho, u)$ be the unique immediately strong solution corresponding to the initial data $(\rho_0, u_0)$ defined in \eqref{eq:initial_data2}. Then, the associated flow map $X$ satisfies
    \begin{align}\label{eq:Xeps}
        X \in L^\infty_{\loc}([0,\infty); C^{1-\eps}(\R^2)) \qquad \text{for all } \eps \in (0, 1).
    \end{align}
    In particular, the Hausdorff dimension of the patch boundary is preserved:
    \begin{align}\label{eq:Hdim}
        \dim_H(\partial D(t)) = 1 \qquad \text{for all } t \geq 0.
    \end{align}
\end{corollary}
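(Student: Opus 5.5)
We derive the corollary from Theorem~\ref{thm:B}, scenario ii), together with the Osgood-type flow estimate of Chemin and Lerner \cite{CheminLerner1995}; see also \cite[Chapter 3]{BCD2011}. Fix $T>0$ and $\eta\in(0,\tfrac12)$. Theorem~\ref{thm:B} provides $\gamma\in L^1(0,T)$ with the log-Lipschitz bound \eqref{eq:logLip2} for $|x-y|<1$. To make this bound meaningful at all scales, set $\mu(r):=r(-\log r)^{1-\eta}$ for $r<1/e$, and extend it to a nondecreasing function on $[0,\infty)$ by a linear tail for $r\ge 1/e$. For $|x-y|\ge 1/e$ we need $|u(t,x)-u(t,y)|\le C\|u(t)\|_{L^\infty}$. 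I would bound the latter using $\|u\|_{L^\infty}\lesssim \|u\|_{L^2}^{1/2}\|\nabla^2u\|_{L^2}^{1/2}$ (or Gagliardo--Nirenberg), the energy bound, and $A_2$. Together these give $\|u(t)\|_{L^\infty}\lesssim t^{-1/2}\in L^1(0,T)$; the $L^2$ norm of $u$ itself, not only of $\sqrt\rho u$, is controlled as in \cite{SkondricViolini2026}. After enlarging $\gamma$ we therefore have $|u(t,x)-u(t,y)|\le\gamma(t)\mu(|x-y|)$ for all $x,y$. The same bound makes the flow $X$ well defined and unique, since $\int_0\dd r/\mu(r)=\infty$ (Osgood).

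The main step is to integrate the Osgood inequality. Let $\delta(t):=|X(t,x)-X(t,y)|$ with $\delta(0)=|x-y|$ small. Then $|\dot\delta|\le\gamma(t)\mu(\delta)$. With the substitution $s=-\log\delta$ the bound $\mu(r)=r(-\log r)^{1-\eta}$ gives $|\tfrac{d}{dt}s|\le\gamma(t)s^{1-\eta}$, and hence $\tfrac{d}{dt}s^\eta\ge-\eta\gamma(t)$. Writing $\Gamma(t)=\int_0^t\gamma$, this yields
\[
(-\log\delta(t))^\eta\ge(-\log|x-y|)^\eta-\eta\Gamma(t),
\]
valid as long as $\delta$ remains below $1/e$. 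Since $(a^\eta-b)^{1/\eta}\ge a-C_{\eps,\eta} b^{1/\eta}-\eps a$ for large $a$, using $(1-\theta)^{1/\eta}\ge1-\theta/\eta$ for the main term, we get $-\log\delta(t)\ge(1-\eps)(-\log|x-y|)-C(\eps,\eta,\Gamma(T))$. That is, $\delta(t)\le C|x-y|^{1-\eps}$ whenever $|x-y|\le r_0(\eps,\Gamma(T))$. Because $\eta>0$ is fixed, the exponent loss tends to zero as $|x-y|\to0$, so every $\eps\in(0,1)$ is attained. The expected obstacle here is purely bookkeeping: making sure trajectories starting at small separation stay in the regime $\delta<1/e$, which the displayed inequality itself guarantees for $r_0$ small. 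For $|x-y|>r_0$ we use the crude bound $|X(t,x)-x|\le\int_0^t\|u\|_{L^\infty}\lesssim\sqrt t$ (compare \eqref{eq:dispBound}) to get $\delta(t)\le|x-y|+C\le C'|x-y|^{1-\eps}$. This proves \eqref{eq:Xeps} uniformly for $t\in[0,T]$. The inverse flow satisfies the same bound, since it is the flow of the backward field, which obeys the same log-Lipschitz estimate.

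For \eqref{eq:Hdim}, recall that $\partial D(t)=X(t,\partial D)$ because $X(t)$ is a homeomorphism. A $C^{\alpha}$ map does not increase Hausdorff dimension by more than a factor $1/\alpha$: covers by sets of diameter $r$ are sent to sets of diameter $\lesssim r^{\alpha}$. Applying this to $X(t)$ with $\alpha=1-\eps$ gives $\dim_H\partial D(t)\le\dim_H\partial D/(1-\eps)=1/(1-\eps)$. Applying it to $X(t)^{-1}$ gives $1=\dim_H\partial D\le\dim_H\partial D(t)/(1-\eps)$. Letting $\eps\to0$ yields $\dim_H\partial D(t)=1$ for every $t\ge0$. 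Here $\dim_H\partial D=1$ because $\partial D$ is a Lipschitz curve. Moreover $\partial D(t)$ is a continuous closed curve, being the image of one under a homeomorphism.
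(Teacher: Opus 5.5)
Your core argument is the paper's: \cref{thm:B}, then the Chemin--Lerner integration of the Osgood inequality, then $\dim_H f(E)\le \dim_H E/\alpha$ for $\alpha$-H\"older $f$. You derive by hand exactly the modulus \eqref{eq:Xest} that the paper quotes from \cite[Theorem 1.3]{CheminLerner1995}. Your use of the inverse flow to get $\dim_H\partial D(t)\ge 1$ is a correct addition; the paper only writes out the upper bound.

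The part that does not hold up is the extension to large scales. You claim $\|u(t)\|_{L^\infty}\lesssim t^{-1/2}$ from a bound on $\|u(t)\|_{L^2(\R^2)}$. In the vacuum setting, the energy and the functionals $A_i$ only control $\sqrt\rho\,u$ and derivatives of $u$. The paper never obtains more than local $L^2$ control of $u$; see \cref{lem:Lpstrip} and the Poincar\'e argument in Step 1 of the proof of \cref{thm:A}. One should not expect more: outside the patch $u(t)$ solves a Stokes system, and a two-dimensional $\dot H^1$ solution tends at infinity to a constant that has no reason to vanish (it is tied to the conserved momentum $\int\rho u\,\dd x$).

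Likewise, the displacement bound \eqref{eq:dispBound} is only available for $x\in D$, not for all $x\in\R^2$. Moreover, your last step $|x-y|+C\le C'|x-y|^{1-\eps}$ is false as $|x-y|\to\infty$. No measure-preserving homeomorphism with bounded displacement has a finite global $(1-\eps)$-H\"older seminorm, so \eqref{eq:Xeps} must be read as a small-scale or local statement.

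This step is dispensable, though:
\begin{itemize}
\item For $|x-y|\le r_0$, your continuity argument already keeps $\delta<1/e$ using only \eqref{eq:logLip2}. The extended modulus $\mu$ and the global $L^\infty$ bound on $u$ are not needed.
\item For $r_0<|x-y|\le R$, chain along the segment $[x,y]$ in steps of length at most $r_0$. This gives $|X(t,x)-X(t,y)|\le (R/r_0+1)\,r_0^{1-\eps}\lesssim_{R,r_0}|x-y|^{1-\eps}$, i.e.\ H\"older bounds on every bounded set, uniformly for $t\in[0,T]$.
\end{itemize}
The Hausdorff-dimension argument, for both $X(t)$ and $X(t)^{-1}$, only uses small scales near $\partial D$ and $\partial D(t)$. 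So the rest of your proof goes through once the large-scale detour is removed.
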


\begin{proof}
    Thanks to\eqref{eq:logLip2}, the flow $X(t, \cdot)$ admits the following modulus of continuity for $|x-y|$ sufficiently small (see \cite[Theorem 1.3]{CheminLerner1995}):
    \begin{align}\label{eq:Xest}
        |X(t,x) - X(t,y)| \leq \exp \left( - \left[ (-\log |x-y|)^\eta - \eta \| \gamma \|_{L^1(0,t)} \right]^\frac{1}{\eta} \right).
    \end{align}
    Since $\eta > 0$, \eqref{eq:Xest} directly implies the Hölder regularity stated in \eqref{eq:Xeps}. 
    Consequently, the standard properties of the Hausdorff dimension under Hölder continuous mappings yield
    \begin{equation}
         \dim_H(\partial D(t)) \leq \frac{1}{1-\eps} \dim_H(\partial D_0) = \frac{1}{1-\eps}.
    \end{equation}
    Sending $\eps \to 0^+$ concludes the proof of \eqref{eq:Hdim}.
\end{proof}

Some remarks are in order:
\begin{enumerate}
    \item It is clear that the estimate \eqref{eq:Xeps} holds true in both scenarios of \cref{thm:B}. Moreover, any $1$-dimensional object is mapped by $X(t,\cdot)$ into a $1$-dimensional object.
    \item The condition $\eta > 0$ defines the threshold for the stability of the Hausdorff dimension of the boundary. In the classical log-Lipschitz case ($\eta = 0$), by taking the limit $\eta \to 0^+$ in \eqref{eq:Xest}, we recover the estimate
    \begin{align}\label{eq:Xloglip}
        |X(t,x) - X(t,y)| \leq |x-y|^{\alpha(t)}, \qquad \alpha(t) = \exp\left(- \| \gamma \|_{L^1(0,t)} \right).
    \end{align}
    This leads to a time-dependent bound on the Hausdorff dimension:
    \begin{align}
        \dim_H(\partial D(t)) \le \frac{1}{\alpha(t)} \dim_H(\partial D_0) = \exp\left(\| \gamma \|_{L^1(0,t)} \right).
    \end{align}
    Thus, the strictly positive assumption $\eta > 0$ is exactly what we need to guarantee that the boundary remains a $1$-dimensional object for all time $t \geq 0$.

  \item While our result ensures that the boundary does not grow dimensionally, it leaves open the question of its underlying geometric structure. As previously discussed, the lack of Lipschitz regularity (inherent to the $L^2$ regularity of the initial data) allows for the possibility of cusp formation, which precludes the preservation of the interface as a Lipschitz curve. In particular, it remains an open question whether the perimeter of the patch becomes infinite over time.
    
\end{enumerate}

\subsection{Strategy of the proof}

To prove \cref{thm:A}, our goal is to place the energy estimate for the difference of two solutions within a Gronwall framework. Let
$(\rho_1,u_1)$ and $(\rho_2,u_2)$ be two immediately strong solutions arising
from the same initial data, and denote
\begin{align}
\delta \rho := \rho_1-\rho_2,
\qquad
\delta u := u_1-u_2.
\end{align}
A formal computation yields
\begin{align}\label{eq:energy-difference}
    \frac12 \|\sqrt{\rho_1}\,\delta u(t)\|_{L^2}^2
    + \int_0^t \|\nabla \delta u(s)\|_{L^2}^2 \,\dd s
    \leq I(t)+J(t),
\end{align}
where
\begin{align}
I(t):=
\left|
\int_0^t \int_{\R^2}
\rho_1 \delta u \otimes \delta u : \nabla u_2
\right|, \qquad 
J(t):=
\left|
\int_0^t \int_{\R^2}
\delta \rho \,\dot u_2 \cdot \delta u
\right|.
\end{align}

We first discuss the term $I(t)$. Away from vacuum, the classical approach
follows the homogeneous setting and relies on Ladyzhenskaya's inequality:
\begin{align}\label{eq:oldineq}
    \|\sqrt{\rho_1}\,\delta u(s)\|_{L^4}^2
    \lesssim
    \|\delta u(s)\|_{L^2}
    \|\nabla \delta u(s)\|_{L^2}
    \lesssim
    \|\sqrt{\rho_1}\,\delta u(s)\|_{L^2}
    \|\nabla \delta u(s)\|_{L^2}.
\end{align}

In the $L^2$ vacuum setting, however, estimate~\eqref{eq:oldineq} is no longer
available, since the weighted norm
$\|\sqrt{\rho_1}\,\delta u\|_{L^2}$ only controls $\delta u$ on the support of
$\rho_1$. Moreover, the estimate cannot be closed through a direct Gronwall
argument because, in general,
\begin{align}
\nabla u_2 \notin L^1_tL^\infty_x.
\end{align}

The main idea is to replace \eqref{eq:oldineq} with a localized estimate on the
evolving patch. Using the displacement bound \eqref{eq:dispBound}, we prove
that
\begin{equation}\label{eq:goal}
\|\delta u(s)\|_{L^4(X_1(s,D))}^2
\lesssim
s^{1/2}\|\nabla \delta u(s)\|_{L^2}^2
+
\|\sqrt{\rho_1}\,\delta u(s)\|_{L^2}
\|\nabla \delta u(s)\|_{L^2}
+
\mathrm{Err}(s),
\end{equation}
where $X_1$ is the flow associated with $u_1$. Since
\begin{align}\label{eq:flow-L4}
    \|\sqrt{\rho_1}\,\delta u(s)\|_{L^4}
    =
    \|\delta u(s)\|_{L^4(X_1(s,D))},
\end{align}
estimate \eqref{eq:goal} plays the role of \eqref{eq:oldineq} in the vacuum
case. This is enough to estimate $I(t)$, after a suitable use of Young's and
Hölder's inequalities.

The term $J(t)$ is more delicate. It contains $\delta\rho$ rather than
$\rho_1$, and therefore it measures the mismatch between the two transported
patches. In particular, it is no longer sufficient to estimate $\delta u$ only
on $X_1(s,D)$. After passing to Lagrangian coordinates, one is naturally led to
control $\delta u$ on the family of sets
\begin{align}
    Y_\tau(s,D)
    :=
    X_2(s;\tau,X_1(\tau,D)),
    \qquad 0\leq \tau\leq s\leq t,
\end{align}
where $X_2(s;\tau,\cdot)$ denotes the flow associated with $u_2$ starting from
time $\tau$.

The key point is that the proof of \eqref{eq:goal} does not rely on any specific property of the flow $X_1$, but only on the fact that the underlying map is measure-preserving and satisfies the parabolic displacement estimate \eqref{eq:dispBound}.
Consequently, the same argument yields an estimate of the form
\eqref{eq:goal}, with $X_1(s,D)$ replaced by $Y_\tau(s,D)$, provided that
\begin{align}\label{eq:goal2}
    |Y_\tau(s,x)-x|
    \lesssim
    \sqrt{s},
    \qquad
    \text{for a.e. } x\in D.
\end{align}
In particular, this allows us to control $\delta u$ on the transported sets
$Y_\tau(s,D)$ in the same way as on $X_1(s,D)$.

If the flow $X_2$ were uniformly Lipschitz, then \eqref{eq:goal2} would follow
immediately. Indeed,
\begin{align}
|X_2(s;\tau,X_1(\tau,x))-x|
&\leq
|X_2(s;\tau,X_1(\tau,x))-X_2(s;\tau,x)|
+
|X_2(s;\tau,x)-x| \notag\\
&\leq
\|\nabla X_2\|_{L^\infty_{t,x}}
|X_1(\tau,x)-x|
+
|X_2(s;\tau,x)-x|
\lesssim
\sqrt{s}.
\end{align}
However, such a Lipschitz bound would require
\begin{align}
\nabla u_2 \in L^1_tL^\infty_x,
\end{align}
which is not available at the energy level.

In \cref{prop:composedflow}, we prove that the displacement estimate
\eqref{eq:goal2} nevertheless remains valid under the weaker assumption
\begin{align}
    \sqrt{t}\,\nabla u_2 \in L^2_tL^\infty_x,
\end{align}
which is satisfied by immediately strong solutions as noticed in \cite{HaoShaoWeiZhang2025}. Although this condition
does not yield uniform Lipschitz control of the flow, it still provides enough
quantitative control on the distortion of trajectories to propagate the
parabolic displacement estimate along the composed flow
$Y_\tau(s,\cdot)$.

This is the key mechanism allowing us to estimate $J(t)$ and ultimately close
the Gronwall argument.
\vspace{0.5cm}

Now we briefly explain the strategy of the proof of \cref{thm:B}. Let $(\rho,u)$ denote either the unique Leray--Hopf solution associated with \eqref{eq:initial_data1}, or the unique immediately strong solution associated with \eqref{eq:initial_data2}.

The first ingredient is an atomic decomposition of the initial datum in the spirit of \cite{LionsPeetre1964}:
\begin{align}\label{eq:realInt}
L^2(\R^2)
=
[\dot H^{-s}(\R^2),\dot H^s(\R^2)]_{1/2,2},
\qquad
s\in(0,1].
\end{align}
Hence we can decompose
\[
u_0=\sum_{j\in\Z}u_{j,0},
\]
where $\{u_{j,0}\}_{j\in\Z}$ is a sequence of divergence-free atoms satisfying
\begin{align}\label{eq:atomic-l2}
c_j
:=
2^{-j/2}\|u_{j,0}\|_{\dot H^s}
+
2^{j/2}\|u_{j,0}\|_{\dot H^{-s}},
\qquad
\|c_j\|_{\ell^2(\Z)}<\infty.
\end{align}
For each atom $u_{j,0}$, we consider its evolution under the flow generated by $(\rho,u)$, namely the solution $u_j$ of the linearized system
\begin{align}\label{eq:linearized-atom}
\begin{cases}
\partial_t(\rho u_j)
+\Div(\rho u_j\otimes u)
+\nabla P_j
\;=\;
\Delta u_j,
\\
\Div u_j
\;=\;
0,
\\
u_j|_{t=0}
\;=\;
u_{j,0}.
\end{cases}
\end{align}

The second key ingredient is the parabolic decay satisfied by each atom. More precisely, for every $s\in(0,1/2)$ there exists $C>0$ such that, for almost every $t>0$ and every $j\in\Z$,
\begin{align}\label{eq:atom-decay}
\begin{aligned}
\|\nabla u_j(t)\|_{L^2}
&\le
C\min\Bigl\{
t^{\frac s2-\frac12}\|u_{j,0}\|_{\dot H^s},
\,
t^{-\frac s2-\frac12}\|u_{j,0}\|_{\dot H^{-s}}
\Bigr\},
\\
\|\nabla u_j(t)\|_{L^\infty}
&\le
C\min\Bigl\{
t^{\frac s2-1}\|u_{j,0}\|_{\dot H^s},
\,
t^{-\frac s2-1}\|u_{j,0}\|_{\dot H^{-s}}
\Bigr\}.
\end{aligned}
\end{align}

These estimates were obtained in \cite{Danchin2025} in the strictly positive density case \eqref{eq:initial_data1}, and in \cite{SkondricViolini2026} for density patches \eqref{eq:initial_data2}, for solutions of \eqref{eq:linearized-atom}.

Combining \eqref{eq:atomic-l2} and \eqref{eq:atom-decay}, we prove the log-Lipschitz estimate \eqref{eq:logLip2}. This extends the arguments of \cite{Danchin2025,SkondricViolini2026}, where the same decay estimates were used to prove Lipschitz regularity in the critical Besov case $u_0\in\dot B^0_{2,1}$, corresponding to $\{c_j\}\in\ell^1(\Z)$. 

Fix $p>2/s$ and by Morrey's inequality
\begin{align}\label{eq:modulus-splitting}
|u(t,x)-u(t,y)|
\lesssim
|x-y|
\sum_{|j|\le N}\|\nabla u_j(t)\|_{L^\infty}
+
|x-y|^{1-\frac2p}
\sum_{|j|>N}\|\nabla u_j(t)\|_{L^p}.
\end{align}
Then, we define
\begin{align}
A(t)
:=
\sum_{j\in\Z}
(1+|j|)^{\eta-1}
\|\nabla u_j(t)\|_{L^\infty},
\qquad
B(t):=
\sum_{j\in\Z}
2^{\frac{|j|}{sp}}
(1+|j|)^{\eta-1}
\|\nabla u_j(t)\|_{L^p}.
\end{align}
Since $\eta\in(0,1)$ and $ r\mapsto 2^{-r/(sp)}(1+r)^{1-\eta}$ is eventually decreasing one can show the following estimates
\begin{align}
\sum_{|j|\le N}\|\nabla u_j(t)\|_{L^\infty}
\le
(1+N)^{1-\eta}A(t), \qquad 
\sum_{|j|>N}\|\nabla u_j(t)\|_{L^p}
\lesssim
2^{-N/(sp)}(1+N)^{1-\eta}B(t).
\end{align}
Combining these bounds with \eqref{eq:modulus-splitting}, we obtain
\begin{align}\label{eq:preloglip}
|u(t,x)-u(t,y)|
\lesssim
(1+N)^{1-\eta}
\Bigl(
|x-y|A(t)
+
|x-y|^{1-\frac2p}2^{-N/(sp)}B(t)
\Bigr).
\end{align}
Inspired by \cite{CheminLerner1995} we choose
\[
N:=\lfloor-\log_2|x-y|\rfloor,
\]
and using that $sp>2$, a straightforward computation yields (for $|x-y|<1$)
\begin{align}\label{eq:loglip-AB}
|u(t,x)-u(t,y)|
\lesssim
|x-y|
(-\log|x-y|)^{1-\eta}
\bigl(A(t)+B(t)\bigr).
\end{align}
Therefore, the main difficulty is to prove that
\[
A,B\in L^1_{\mathrm{loc}}([0,\infty)),
\]
assuming \eqref{eq:atom-decay}. This proof is is discussed in Section 4.

\section{Preliminary estimates and structural properties}
In this section, we collect the main structural properties of immediately strong solutions, together with stability estimates describing the interaction between two such solutions. These results will be used throughout the proof of \cref{thm:A}.

For convenience, we introduce the following two recurring settings.

\medskip

\newcounter{scenario}
\renewcommand{\thescenario}{S\arabic{scenario}}

\refstepcounter{scenario}\label{item:single_solution}
\textup{(\thescenario)} 
Let $(\rho,u)$ be an immediately strong solution with initial data $(\rho_0,u_0)$ satisfying \eqref{eq:initial_data2}, and let $X$ denote the associated flow. We write $C_u$ for the constant appearing in \eqref{eq:ImmStrongEst}.

\medskip

\refstepcounter{scenario}\label{item:two_solutions}
\textup{(\thescenario)} 
Let $(\rho_1,u_1)$ and $(\rho_2,u_2)$ be two immediately strong solutions corresponding to the initial data $(\rho_0,u_1(0))$ and $(\rho_0,u_2(0))$, respectively, satisfying \eqref{eq:initial_data2}. Let $X_1$ and $X_2$ denote the associated flows. We write $C_{u_1}$ and $C_{u_2}$ for the constants appearing in \eqref{eq:ImmStrongEst} associated with $u_1$ and $u_2$, respectively.
\subsection{Estimates along the flow}

\begin{lemma}\label{lem:cruc}
Let $T>0$. Under assumption \ref{item:single_solution} there exists $C=C(C_u,T,D)$ such that for every $f\in \dot H^1(\R^2) \cap L^2(D)$ and almost every $t\in (0,T)$,
\begin{equation}\label{eq:comp_L4}
\|f\|_{L^4(X(t,D))}
\le C \Big(
t^{1/4}\,\|\nabla f\|_{L^2(\R^2)}
+ \|f\|_{L^2(D)}^{1/2}\,\|\nabla f\|_{L^2(\R^2)}^{1/2}
+ \|f\|_{L^2(D)}
\Big).
\end{equation}
\end{lemma}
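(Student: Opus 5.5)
The plan is to compare $f$ on the transported set $X(t,D)$ with $f$ on the initial set $D$, using only that $X(t,\cdot)$ is measure preserving and satisfies the parabolic displacement bound \eqref{eq:dispBound}. First, since $u$ is divergence free, $X(t,\cdot)$ preserves Lebesgue measure. Hence
\begin{align}
\|f\|_{L^4(X(t,D))}^4=\int_D |f(X(t,x))|^4\,\dd x ,
\end{align}
and therefore
\begin{align}
\|f\|_{L^4(X(t,D))}\le \|f\circ X(t,\cdot)-f\|_{L^4(D)}+\|f\|_{L^4(D)}.
\end{align}
The last term is handled by Ladyzhenskaya's (Gagliardo--Nirenberg) inequality on the bounded Lipschitz domain $D$:
\begin{align}
\|f\|_{L^4(D)}\lesssim \|f\|_{L^2(D)}^{1/2}\|f\|_{H^1(D)}^{1/2}\lesssim \|f\|_{L^2(D)}^{1/2}\|\nabla f\|_{L^2}^{1/2}+\|f\|_{L^2(D)} .
\end{align}
This produces the second and third terms in \eqref{eq:comp_L4}. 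Alternatively, one can apply an extension operator for $D$ and then the whole-space inequality.

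The main step is the displacement term $\|f\circ X(t)-f\|_{L^4(D)}$, which should give $t^{1/4}\|\nabla f\|_{L^2}$. A pointwise fundamental-theorem-of-calculus argument along the segment from $x$ to $X(t,x)$ fails, since $\nabla f$ is only in $L^2$. Instead, I mollify at the parabolic scale $r=\sqrt t$: write $f=f_r+(f-f_r)$ with $f_r=f*\varphi_r$, and split $f\circ X-f=(f_r\circ X-f_r)+(f-f_r)\circ X-(f-f_r)$.

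For the rough part, measure preservation together with the Sobolev bound $\|f-f_r\|_{L^4(\R^2)}\lesssim r^{1/2}\|\nabla f\|_{L^2}$ (by Bernstein/Sobolev scaling, since $\dot H^{1/2}\subset L^4$) gives $\lesssim t^{1/4}\|\nabla f\|_{L^2}$ for both pieces. For the smooth part, I use $|f_r(X(t,x))-f_r(x)|\le |X(t,x)-x|\sup_{B(x,C\sqrt t)}|\nabla f_r|$ together with $|X-x|\le C\sqrt t$. The local maximal function $M_r g(x):=\sup_{B(x,Cr)}|\nabla f_r|$ is controlled by $\varphi$-averages of $|\nabla f|$ at scale $\sim r$, so $\|M_r g\|_{L^4}\lesssim r^{-1/2}\|\nabla f\|_{L^2}$. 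Multiplying by the displacement $\sqrt t=r$ gives $r^{1/2}\|\nabla f\|_{L^2}=t^{1/4}\|\nabla f\|_{L^2}$. Here $C$ depends on the constant in \eqref{eq:dispBound}, hence on $C_u,T,D$.

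The main obstacle is this uniform bound on $\sup_{B(x,Cr)}|\nabla f_r|$. It requires a careful mollifier estimate of the form
\begin{align}
\sup_{B(x,Cr)}|\nabla \varphi_r * f|\lesssim (|\nabla f|*\psi_{r})(x),
\end{align}
with a fixed enlarged bump $\psi$, followed by Young's inequality. This gives $\|\psi_r*|\nabla f|\|_{L^4}\le \|\psi_r\|_{L^{4/3}}\|\nabla f\|_{L^2}\sim r^{-1/2}\|\nabla f\|_{L^2}$, and I expect this estimate to go through. A second subtlety is that $f$ lies only in $\dot H^1$ (modulo constants). This is harmless because $f\in L^2(D)$ fixes the constant, and all the bounds above involve only $\nabla f$ or $f$ on $D$.
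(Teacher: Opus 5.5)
Your proof is correct. It shares the paper's outer structure: measure preservation gives $\|f\|_{L^4(X(t,D))}=\|f\circ X(t,\cdot)\|_{L^4(D)}$, then the triangle inequality, then Gagliardo--Nirenberg on $D$. Where you differ is the key displacement estimate $\|f\circ X(t,\cdot)-f\|_{L^4(D)}\lesssim t^{1/4}\|\nabla f\|_{L^2}$.

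\textbf{The paper's route.} It obtains this estimate in one stroke from the Kinnunen--Martio pointwise inequality
\begin{align}
|f(y)-f(x)|\lesssim |y-x|^{1/2}\bigl(M_{1/2}\nabla f(y)+M_{1/2}\nabla f(x)\bigr),
\end{align}
evaluated at $y=X(t,x)$. Measure preservation then handles the term at $X(t,x)$, and the boundedness $M_{1/2}\colon L^2\to L^4$ finishes the argument.

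\textbf{Your route.} You split at the parabolic scale $r=\sqrt t$. The smooth part $f_r$ is handled by the mean value theorem and the $\sqrt t$ displacement bound. The rough part $f-f_r$ is handled by measure preservation alone.

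\textbf{The step you flagged goes through.} For $|z-x|<Cr$ one has $\varphi_r(z-w)\le \|\varphi\|_{L^\infty}r^{-2}\mathbf{1}_{B(x,(C+1)r)}(w)$. Hence $\psi=\|\varphi\|_{L^\infty}\mathbf{1}_{B(0,C+1)}$ works, and $\|\psi_r\|_{L^{4/3}}=r^{-1/2}\|\psi\|_{L^{4/3}}$.

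\textbf{A cleaner justification for the rough part.} Rather than invoking $\dot H^{1/2}\subset L^4$, which carries a modulo-constants issue, use two facts:
\begin{align}
\|f-f_r\|_{L^2}\lesssim r\|\nabla f\|_{L^2}
\end{align}
(from $f(x)-f(x-y)=\int_0^1\nabla f(x-\theta y)\cdot y\,\dd\theta$ and Minkowski), together with Ladyzhenskaya's inequality applied to $f-f_r\in H^1(\R^2)$. This gives $\|f-f_r\|_{L^4}\lesssim r^{1/2}\|\nabla f\|_{L^2}$ with no ambiguity.

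\textbf{Comparison.} The paper's argument is shorter and scale-free, but it imports two nontrivial harmonic-analysis facts: the pointwise inequality and the $L^2\to L^4$ bound for $M_{1/2}$. It also implicitly requires that $X(t,x)$ avoid the exceptional null set of that inequality for a.e.\ $x$; this follows from measure preservation. Your argument is self-contained, using only Young's convolution inequality and Ladyzhenskaya. It applies the pointwise step only to the smooth $f_r$, so no exceptional set appears. It also makes explicit that only measure preservation and the $K\sqrt t$ displacement bound are used, which is exactly what \cref{rem:crucial} needs.
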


\begin{proof}
We denote by $C$ any constant depending only on $C_u$, $T$ and $D$, which may change from line to line.
We recall the fractional maximal operator of order $1/2$:
\begin{equation}
M_{1/2}(g)(x)
= \sup_{r>0} r^{-3/2}\int_{B_r(x)} |g(y)|\,\dd y.
\end{equation}
There exists a negligible set $N\subset\R^2$ such that for all $x,y\notin N$
(see \cite[Eq.~(1.2)]{KinnunenMartio1997})
\begin{equation}\label{eq:max}
|f(y)-f(x)|
\le C\,|y-x|^{1/2}
\Big((M_{1/2}\nabla f)(y)+(M_{1/2}\nabla f)(x)\Big).
\end{equation}
Applying \eqref{eq:max} with $y=X(t,x)$ and using the displacement estimate from \cref{thm:existL2} (see \eqref{eq:dispBound}),
\begin{equation}
|X(t,x)-x|\le C\sqrt{t} \qquad \text{for a.e. } x\in D, \qquad \text{for a.e. t>0} ,
\end{equation}
we obtain for almost every $x\in D$
\begin{equation}\label{eq:max_sq}
|f(X(t,x))-f(x)|^4
\le C t
\Big(
|(M_{1/2}\nabla f)(X(t,x))|^4
+|(M_{1/2}\nabla f)(x)|^4
\Big).
\end{equation}
Integrating over $D$ and using that $X(t,\cdot)$ is measure-preserving, we get
\begin{equation}
\|f\circ X(t,\cdot)-f\|_{L^4(D)}^4
\le C t \|M_{1/2}(\nabla f)\|_{L^4(\R^2)}^4.
\end{equation}
Taking the fourth root and using the boundedness $M_{1/2}:L^2\to L^4$
(see \cite[Section~1.1]{CaponeCruzUribeFiorenza2007}), we infer
\begin{equation}\label{eq:comp_L2}
\|f\circ X(t,\cdot)-f\|_{L^4(D)}
\le C\,t^{1/4}\,\|\nabla f\|_{L^2(\R^2)}.
\end{equation}
Since $X(t,\cdot)$ is measure-preserving,
\begin{equation}
\|f\|_{L^4(X(t,D))}=\|f\circ X(t,\cdot)\|_{L^4(D)},
\end{equation}
and therefore
\begin{equation}
\|f\|_{L^4(X(t,D))}
\le C\,t^{1/4}\,\|\nabla f\|_{L^2(\R^2)}+\|f\|_{L^4(D)}.
\end{equation}
Finally, by the two-dimensional Gagliardo--Nirenberg inequality,
\begin{equation}\label{eq:comp_L4D}
\|f\|_{L^4(D)}
\le C
\|f\|_{L^2(D)}^{1/2}\|\nabla f\|_{L^2(\R^2)}^{1/2}
+ C \|f\|_{L^2(D)}.
\end{equation}
Combining \eqref{eq:comp_L2} with \eqref{eq:comp_L4D} yields \eqref{eq:comp_L4}.
\end{proof}
\begin{remark}\label{rem:crucial}
Inspecting the previous proof, we observe that the estimate \eqref{eq:comp_L4} holds for any flow map $X(t)$ which is measure-preserving and satisfies
\begin{equation}
|X(t,x)-x|\le K \sqrt{t} \qquad \text{for a.e. } x\in D, \qquad \text{for a.e. } t\in (0,T).
\end{equation}
In this case, the constant $C$ depends only on $K$ and $D$.
\end{remark}

We will repeatedly use the following estimate, which is a simplified version of the result in \cite{SkondricViolini2026} (see Section $3$ therein).

\begin{lemma}\label{lem:Lpstrip}
Under assumption \ref{item:single_solution}, fix $R>0$ and $T>0$. Let
\begin{align}\label{eq:Drt}
    D_{R,t} := D + B_{R\sqrt{t}}(0).
\end{align}
Then there exists a constant $C=C(C_u,R,T,D)>0$ such that, for every measurable function $v$ and for almost every $t\in(0,T)$,
\begin{align}
\|v\|_{L^2(D_{R,t})}
&\le
C \Big(
t^{\frac12}\,\|\nabla v\|_{L^2(\R^2)}
+
\|\rho(t)\,v\|_{L^2(\R^2)}
\Big), \label{eq:strip_L2}\\
\|v\|_{L^\infty(D_{R,t})}
&\le
C \Big(
t^{\frac14}\,\|\nabla v\|_{L^4(\R^2)}
+
t^{-\frac12}\,\|\rho(t)\,v\|_{L^2(\R^2)}
\Big). \label{eq:strip_Linf}
\end{align}
\end{lemma}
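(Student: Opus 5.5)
The plan is to compare $v$ on the thickened set $D_{R,t}$ with $v$ on the transported patch $X(t,D)$, where $\rho(t)=\mathbf 1_{X(t,D)}$. The tool is the displacement bound \eqref{eq:dispBound}: every point of $X(t,D)$ lies within $C\sqrt t$ of $D$. Hence $X(t,D)\subset D_{C,t}$, and both sets live at the parabolic scale $\sqrt t$ around $D$.

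Since $D$ is bounded Lipschitz, I would cover $D_{R,t}$ by a family of balls $\{B_{\sqrt t}(z_k)\}$, or boxes of side comparable to $\sqrt t$, with bounded overlap. For $t\in(0,T)$ the number of such balls meeting $D_{R,t}$ is $\lesssim |D_{R,t}|/t\lesssim C(D,R,T)/t$. The key geometric claim is that each enlarged ball $B_{K\sqrt t}(z_k)$, with $K=K(R,C_u,T,D)$, contains a portion of $X(t,D)$ of measure $\ge c\,t$.

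To prove the claim, note that $z_k$ lies within $R\sqrt t$ of some point $x_0\in D$. The Lipschitz property of $D$ (interior cone condition, valid for $\sqrt t\le \sqrt T$ after adjusting constants depending on $D$ and $T$) gives $|B_{\sqrt t}(x_0)\cap D|\ge c\,t$. Points of this set are moved by at most $C\sqrt t$, so their images lie in $B_{(R+1+C)\sqrt t}(z_k)$. Since $X(t,\cdot)$ preserves measure, these images have total measure $\ge ct$. This is the step I expect to be the main obstacle: it needs the displacement bound for a.e. $x$ at a.e. $t$, and uniformity of the cone constant up to scale $\sqrt T$.

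With the claim in hand, each estimate follows from a local Poincaré-type inequality on a ball $B=B_{K\sqrt t}(z_k)$ with a subset $E=B\cap X(t,D)$ satisfying $|E|\ge c|B|$.

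For \eqref{eq:strip_L2}, write $\|v\|_{L^2(B)}\le \|v-\bar v_E\|_{L^2(B)}+|B|^{1/2}|\bar v_E|$, where $\bar v_E$ is the average of $v$ over $E$. The Poincaré inequality with mean over a set of proportional measure gives $\|v-\bar v_E\|_{L^2(B)}\lesssim \sqrt t\,\|\nabla v\|_{L^2(B)}$. By Cauchy--Schwarz, $|B|^{1/2}|\bar v_E|\lesssim \|v\|_{L^2(E)}\le\|\rho v\|_{L^2(B)}$. Squaring and summing over $k$ with bounded overlap yields \eqref{eq:strip_L2}.

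For \eqref{eq:strip_Linf}, work on a single ball. Morrey's inequality gives $\operatorname{osc}_B v\lesssim (\sqrt t)^{1-2/4}\|\nabla v\|_{L^4}=t^{1/4}\|\nabla v\|_{L^4}$. The mean satisfies $|\bar v_E|\le |E|^{-1/2}\|v\|_{L^2(E)}\lesssim t^{-1/2}\|\rho v\|_{L^2}$. Taking the supremum over $k$ gives \eqref{eq:strip_Linf}.

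If the function $v$ is not in the relevant Sobolev space, the right-hand side is infinite and there is nothing to prove. The "every measurable $v$" in the statement is understood in this sense.
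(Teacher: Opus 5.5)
Your proof is correct. The paper does not prove this lemma: it imports it as a simplified version of an estimate in \cite{SkondricViolini2026} (Section 3 there). So you have given a self-contained proof of a quoted result, not a variant of an in-paper argument.

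The closest in-paper technique is the proof of \cref{lem:cruc}. There $f$ is composed with the flow, and a pointwise maximal-function inequality is combined with measure preservation. That route compares $v\circ X(t,\cdot)$ with $v$ on $D$ itself. It does not directly reach the vacuum part $D_{R,t}\setminus X(t,D)$, and it does not give a pointwise bound in terms of $\|\rho(t)v\|_{L^2}$. Your localisation at the parabolic scale $\sqrt t$ handles both estimates at once. The mass bound $\int_{B_{K\sqrt t}(z_k)}\rho(t)\,\dd x\ge c\,t$ reduces everything to Poincar\'e and Morrey on balls, with the mean taken over a subset of proportional measure. It also shows that the Lipschitz regularity of $D$ enters only through the measure-density property $|B_r(x)\cap D|\ge c\,r^2$ for $x\in\overline D$ and $r\le\sqrt T$.

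Two small points to tighten:
\begin{itemize}
\item If your balls are only required to meet $D_{R,t}$, their centres lie within $(R+1)\sqrt t$ of $\overline D$, not $R\sqrt t$. Just enlarge $K$.
\item The mass bound is cleanest via the push-forward identity $\int_B\rho(t)\,\dd x=|\{x\in D:\ X(t,x)\in B\}|$. This needs \eqref{eq:dispBound} only for a.e.\ $x\in D$ at the fixed admissible time $t$, and it avoids any measurability question for images $X(t,S)$.
\end{itemize}
Finally, the square-summation in the $L^2$ bound relies on the enlarged balls $B_{K\sqrt t}(z_k)$ still having overlap bounded by a constant depending only on $K$. You use this implicitly and it holds.
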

\subsection{Flow distortion}
We now turn to the analysis of the distortion of the flow associated with immediately strong solutions. If $u_0 \notin \dot B^0_{2,1}(\R^2)$, the velocity field is no longer Lipschitz, and the associated flow may exhibit a loss of regularity. The next result provides a quantitative control of this effect.

\begin{proposition}\label{prop:composedflow}
Under assumption \ref{item:two_solutions}, fix $T>0$. For $0<s<t<T$, define
\begin{equation}
Y_s(t,x):=X_2\bigl(t,s, X_1(s,x)\bigr).
\end{equation}
Then there exists a constant $C=C(C_{u_1},C_{u_2},T,D)>0$ such that, for almost every $x\in D$ and almost every $0<s<t<T$,
\begin{equation}
|Y_s(t,x)-x|\le C\sqrt{t}.
\end{equation}
\end{proposition}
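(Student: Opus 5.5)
We split the displacement of the composed flow by inserting the point $X_2(t,s,x)$:
\begin{align}
|Y_s(t,x)-x|
\le
|X_2(t,s,X_1(s,x))-X_2(t,s,x)|
+|X_2(t,s,x)-x| .
\end{align}
It is convenient to rewrite this without singling out $x$. Set $z:=X_1(s,x)$, so that $Y_s(t,x)=X_2(t,s,z)$. Then
\begin{align}
|Y_s(t,x)-x|\le |X_2(t,s,z)-z|+|X_1(s,x)-x| .
\end{align}
The second term is at most $C\sqrt{s}\le C\sqrt t$ by \eqref{eq:dispBound} applied to $u_1$. So everything reduces to the displacement of $X_2$ between times $s$ and $t$, started from points $z\in X_1(s,D)$ (up to null sets).

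For the first term we write
\begin{align}
|X_2(t,s,z)-z|\le\int_s^t|u_2(\tau,X_2(\tau,s,z))|\dd\tau .
\end{align}
Trajectories of $u_2$ leave the support of $\rho_2$, so the natural plan is a bootstrap. Fix a large $R$ and let $\tau^*$ be the first time after which $X_2(\tau,s,z)$ exits $D_{R,\tau}$ (the set in \eqref{eq:Drt}). While $X_2(\tau,s,z)\in D_{R,\tau}$, we can use \eqref{eq:strip_Linf} with $v=u_2(\tau)$. The term $\|\rho_2 u_2\|_{L^2}$ is bounded by energy. The term $\tau^{1/4}\|\nabla u_2\|_{L^4}$ is bounded through Gagliardo--Nirenberg by $\tau^{1/4}\|\nabla u_2\|_{L^2}^{1/2}\|\nabla^2u_2\|_{L^2}^{1/2}$. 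Using $A_1,A_2$, this gives
\begin{align}
\|u_2(\tau)\|_{L^\infty(D_{R,\tau})}\le C\tau^{-1/2},
\end{align}
with $C$ depending on $R$. Integrating over $(s,t)$ yields a bound $C(\sqrt t-\sqrt s)\le C\sqrt t$, but only with a constant depending on $R$. This dependence is what makes the bootstrap delicate.

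This step is the main obstacle. The constant from \eqref{eq:strip_Linf} grows with $R$, so a naive continuity argument does not close. I would break the circularity in two ways.

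First, use the smallness of $\sqrt{\tau}\nabla u_2\in L^2_\tau L^\infty_x$ on short intervals $(0,\delta)$, combined with the Lipschitz-type distortion estimate
\begin{align}
|X_2(\tau,s,z)-X_2(\tau,s,z')|\le |z-z'|\exp\Bigl(\int_s^\tau\|\nabla u_2\|_{L^\infty}\Bigr).
\end{align}
By Cauchy--Schwarz,
\begin{align}
\int_s^t\|\nabla u_2\|_{L^\infty}\le \Bigl(\int_s^t \tau\|\nabla u_2\|_{L^\infty}^2\dd\tau\Bigr)^{1/2}\bigl(\log(t/s)\bigr)^{1/2}.
\end{align}
This is bounded on dyadic ranges $t\le 2s$. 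So on a dyadic window the flow $X_2(\cdot,s,\cdot)$ is uniformly bi-Lipschitz.

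Second, I would compare with an actual trajectory of $X_2$ from time $0$. Pick $x'\in D$ with $X_2(s,x')$ close to $z$. This is possible, at least in measure, since $X_2(s,D)$ and $X_1(s,D)$ both lie in $D+B_{C\sqrt s}$, and both are images of $D$ under measure-preserving maps with parabolic displacement. For $X_2(t,s,X_2(s,x'))=X_2(t,x')$ the bound \eqref{eq:dispBound} already holds. The bi-Lipschitz control on dyadic windows then transfers it to $z$ with an $O(\sqrt t)$ loss.

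When $t\gg s$, I would iterate over the dyadic times $s,2s,4s,\dots,t$. The displacements accumulate as a geometric sum $\sum_k\sqrt{2^ks}\lesssim\sqrt t$, provided the distortion constant per window is uniform. I expect this uniformity to follow because $\int_{2^ks}^{2^{k+1}s}\tau\|\nabla u_2\|_{L^\infty}^2\dd\tau$ is summable in $k$.

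If the first-exit bootstrap is cleaner, I would instead fix $R=2C_0$, with $C_0$ taken from \eqref{eq:dispBound}, and show that the a priori bound $C(R)\sqrt t$ stays below $R\sqrt t$ by the same dyadic smallness. Either way, the key input is $\sqrt t\nabla u_2\in L^2_tL^\infty_x$.
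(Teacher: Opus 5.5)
Your ingredients are the right ones: the bound $\int_0^\infty\tau\|\nabla u_2\|_{L^\infty}^2\,\dd\tau\lesssim C_{u_2}^2$, the Lipschitz estimate for $X_2(t,s,\cdot)$, and comparison with a genuine $u_2$-trajectory issued from $D$ at time $0$. But the argument as written does not close for $t\gg s$.

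\emph{The splits.} Your initial splits insert $X_2(t,s,x)$, and then reduce to the displacement from $z=X_1(s,x)$. Both are $u_2$-trajectories started at time $s$, so \eqref{eq:dispBound} says nothing about them. That is what pushes you into the circular bootstrap on $D_{R,\tau}$.

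\emph{The dyadic iteration.} The errors do not add up as $\sum_k\sqrt{2^ks}$. If you compare $X_2(\cdot,s,z)$ with $X_2(\cdot,x')$ window by window, the separation at time $2^Ks$ is $\prod_{k<K}L_k$ times the initial gap. A uniform per-window constant $L$ therefore gives $L^K\sqrt s=(t/s)^{\log_2 L}\sqrt s$, which is $\lesssim\sqrt t$ only if $L\le\sqrt2$. An additive geometric sum would require re-anchoring, at each dyadic time, to a point of $X_2(2^ks,D)$ within $O(\sqrt{2^ks})$ of the current position. That is precisely the statement being proved.

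\emph{The choice of $x'$.} Your ``at least in measure'' selection is not justified. Two sets both contained in $D+B_{C\sqrt s}$ need not be pointwise $O(\sqrt s)$-close.

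\emph{The missing idea.} You do not need per-window control, because the total distortion grows subpolynomially. Take simply $x'=x$, so that $|X_1(s,x)-X_2(s,x)|\le (C_1+C_2)\sqrt s$, and split
\[
|Y_s(t,x)-x|\le|X_2(t,s,X_1(s,x))-X_2(t,s,X_2(s,x))|+|X_2(t,x)-x| .
\]
Cauchy--Schwarz over the whole interval $(s,t)$ gives $\int_s^t\|\nabla u_2\|_{L^\infty}\,\dd\tau\le C_{u_2}\sqrt{\log(t/s)}$. Hence
\[
\|\nabla X_2(t,s,\cdot)\|_{L^\infty}\le \exp\bigl(C_{u_2}\sqrt{\log(t/s)}\bigr)\le e^{C_{u_2}^2/2}\sqrt{t/s},
\]
using $a\sqrt y\le y/2+a^2/2$. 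The $\sqrt{t/s}$ growth of the Lipschitz constant exactly compensates the $\sqrt s$ initial gap, and the bound $C\sqrt t$ follows in one line; this is the paper's proof.

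Your own summability remark can recover the same estimate. Apply it to the product rather than to each factor: $\exp\bigl(\sum_{k<K}\int_{2^ks}^{2^{k+1}s}\|\nabla u_2\|_{L^\infty}\bigr)\le\exp(C\sqrt K)\lesssim 2^{K/2}$. With this, the velocity bound on $D_{R,\tau}$ and the bootstrap are not needed at all.
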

\begin{proof}
We first estimate the Lipschitz constant of the flow $X_2$. By the standard bound,
\begin{align}
\|\nabla X_2(t,s,\cdot)\|_{L^\infty(\R^2)}
\le \exp\!\left(\int_s^t \|\nabla u_2(\tau)\|_{L^\infty(\R^2)}\,d\tau\right).
\end{align}
Moreover, by \cite[Equation 3.11]{HaoShaoWeiZhang2026} and Young's inequality,
\begin{align}
\|\nabla u_2(\tau)\|_{L^\infty}^2 
&\lesssim
\|\sqrt{\rho} \dot u_2(\tau)\|_{L^2}^2
+
\|\nabla u_2(\tau)\|_{L^2}\,\|\nabla \dot u_2(\tau)\|_{L^2} \\
&\lesssim
\|\sqrt{\rho} \dot u_2(\tau)\|_{L^2}^2
+ \tau^{-1}\|\nabla u_2(\tau)\|_{L^2}^2
+ \tau \|\nabla \dot u_2(\tau)\|_{L^2}^2.
\end{align}
Multiplying by $\tau$ and integrating over $(0,\infty)$, we obtain
\begin{align}
\int_0^\infty \tau \|\nabla u_2(\tau)\|_{L^\infty}^2\,d\tau
\lesssim C_{u_2}^2.
\end{align}
Hence, for every $0<s<t$, by Cauchy--Schwarz,
\begin{align}
\int_s^t \|\nabla u_2(\tau)\|_{L^\infty}\,d\tau
&\le
\left(\int_s^t \tau^{-1}\,d\tau\right)^{1/2}
\left(\int_s^t \tau \|\nabla u_2(\tau)\|_{L^\infty}^2\,d\tau\right)^{1/2}
\le C_{u_2}\sqrt{\log\frac{t}{s}}.
\end{align}
Therefore,
\begin{align}\label{eq:almostlipflow}
\|\nabla X_2(t,s,\cdot)\|_{L^\infty(\R^2)}
&\le \exp\!\Big(C_{u_2}\sqrt{\log\tfrac{t}{s}}\Big)
\le C \sqrt{\frac{t}{s}},
\end{align}
where $C>0$ depends only on $C_{u_2}$, and we used the elementary inequality
\begin{align}
a\sqrt{x}\le \tfrac12 x + \tfrac{a^2}{2}, \qquad x\ge 0.
\end{align}
Fix $x\in D$. Using the flow property
\begin{align}
X_2(t,x)=X_2\bigl(t,s,X_2(s,x)\bigr),
\end{align}
we write
\begin{align}
|Y_s(t,x)-x|
\le 
|X_2(t,s,X_1(s,x))-X_2(t,s,X_2(s,x))|
+
|X_2(t,x)-x|.
\end{align}
By \cref{thm:existL2}, there exist constants $C_i=C_i(C_{u_i},T,D)$ such that
\begin{align}
|X_i(\tau,x)-x|\le C_i\sqrt{\tau}, \qquad i=1,2,
\end{align}
for all $\tau\ge0$ and a.e.~$x\in D$. In particular,
\begin{align}
|X_2(t,x)-x|\le C_2\sqrt{t},
\qquad
|X_1(s,x)-X_2(s,x)|
\le (C_1+C_2)\sqrt{s}.
\end{align}
Using \eqref{eq:almostlipflow}, we estimate
\begin{align}
|X_2(t,s,X_1(s,x))-X_2(t,s,X_2(s,x))|
\le
\|\nabla X_2(t,s,\cdot)\|_{L^\infty}\,|X_1(s,x)-X_2(s,x)|
\le C(C_1+C_2)\sqrt{t}.
\end{align}
Combining the above estimates yields the claim.
\end{proof}

The quantity
\begin{align}\label{eq:qLL}
w_M(t,s) := \exp\!\left( M \sqrt{\log \frac{t}{s}} \right)
\end{align}
will play a central role in the sequel. As a consequence of \cref{prop:composedflow}, we obtain the following estimate.

\begin{corollary}\label{cor:contrdist}
Under assumption \textup{(\ref{item:single_solution})}, there exists a constant $M=M(C_u)>0$ such that, for almost every $0<s<t$,
\begin{equation}\label{eq:contrdist}
\|\nabla X(t,s,\cdot)\|_{L^\infty(\R^2)}
\le w_M(t,s).
\end{equation}
\end{corollary}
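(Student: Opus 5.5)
The plan is to reuse the first part of the proof of \cref{prop:composedflow}, now for the single flow $X$ of scenario \ref{item:single_solution}. The aim is to stop at the exponential bound and not pass to the cruder estimate $C\sqrt{t/s}$.

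The first step is the standard Lipschitz bound for the flow of a velocity field whose gradient is locally integrable in time away from $t=0$:
\begin{align}
\|\nabla X(t,s,\cdot)\|_{L^\infty(\R^2)} \le \exp\Big(\int_s^t \|\nabla u(\tau)\|_{L^\infty}\,\dd\tau\Big).
\end{align}
This follows by differentiating $\partial_t X(t,s,x) = u(t,X(t,s,x))$ in $x$ and applying Gronwall. It is justified for a.e.\ $0<s<t$ because, on $[s,t]$, the time weights in $A_1,A_2,A_3$ are bounded below. Hence $\nabla u\in L^1(s,t;L^\infty)$ there, and the flow $X(t,s,\cdot)$ is Lipschitz.

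The second step controls the exponent. I use the interpolation inequality from \cite[Equation 3.11]{HaoShaoWeiZhang2026} together with Young's inequality, exactly as in the proof of \cref{prop:composedflow}:
\begin{align}
\tau\|\nabla u(\tau)\|_{L^\infty}^2 \lesssim \tau\|\sqrt{\rho}\dot u(\tau)\|_{L^2}^2 + \|\nabla u(\tau)\|_{L^2}^2 + \tau^2\|\nabla\dot u(\tau)\|_{L^2}^2 .
\end{align}
Integrating in $\tau$ over $(0,\infty)$, the right-hand side is bounded by $A_1$, $A_0$ and $A_2$ respectively. This gives
\begin{align}
\int_0^\infty \tau\|\nabla u(\tau)\|_{L^\infty}^2\,\dd\tau \le K,
\end{align}
where $K$ depends only on $C_u$. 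No time horizon $T$ enters here, since the bound in \eqref{eq:ImmStrongEst} is uniform in $t$.

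The third step applies Cauchy--Schwarz with the weight $\tau^{-1}$:
\begin{align}
\int_s^t\|\nabla u(\tau)\|_{L^\infty}\,\dd\tau \le \Big(\log\tfrac ts\Big)^{1/2} K^{1/2}.
\end{align}
Setting $M:=K^{1/2}$ and inserting this into the Gronwall bound yields $\|\nabla X(t,s,\cdot)\|_{L^\infty}\le w_M(t,s)$, with $M=M(C_u)$ as claimed.

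I do not expect a real obstacle; the argument is contained in the proof of \cref{prop:composedflow}. The only delicate point is rigour in the first step: the flow must be well defined and Lipschitz from time $s>0$ onward. This is covered by the finiteness of $\int_s^t\|\nabla u\|_{L^\infty}$ established in the third step, via Cauchy--Lipschitz theory for Lipschitz-in-space fields.
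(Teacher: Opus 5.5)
Your proposal is correct and is essentially the paper's argument. The corollary is just the first inequality in \eqref{eq:almostlipflow}, obtained in the proof of \cref{prop:composedflow} by exactly your three steps: the Gr\"onwall bound for $\nabla X$, the interpolation inequality of \cite{HaoShaoWeiZhang2026} with Young's inequality giving $\int_0^\infty \tau\|\nabla u(\tau)\|_{L^\infty}^2\,\dd\tau\lesssim C_u$, and Cauchy--Schwarz against the weight $\tau^{-1}$, yielding $M=M(C_u)$ with no dependence on $T$.
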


\begin{proof}
The claim follows directly from \eqref{eq:almostlipflow}.
\end{proof}

A closely related weight appeared in \cite{HaoShaoWeiZhang2025} in a different context. In particular, the authors establish a variant of the following result.

\begin{lemma}\label{lem:weighted}
Let $M>0$ and define
\begin{align}
L_{w_M} f(t) := \frac{1}{t} \int_0^t w_M(t,s)\, f(s)\, \dd s.
\end{align}
Then $L_{w_M}$ is bounded on $L^p(0,\infty)$ for every $p\in(1,\infty]$. Moreover, its operator norm depends only on $p$ and $M$. In particular, taking $p=\infty$ and $f\equiv 1$, there exists a constant $C=C(M)>0$ such that for every $t>0$,
\begin{align}
\int_0^t w_M(t,s)\,\dd s \le C\,t.
\end{align}
\end{lemma}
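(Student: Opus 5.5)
The plan is to change variables $s = t\sigma$ so that $L_{w_M}$ becomes a Hardy-type averaging operator with a multiplicative kernel, and then apply Minkowski's integral inequality. Writing $\sigma = s/t\in(0,1)$, we have $w_M(t,s)=\exp\bigl(M\sqrt{\log(1/\sigma)}\bigr)=:k(\sigma)$, and therefore
\begin{align}
L_{w_M} f(t) = \int_0^1 k(\sigma)\, f(t\sigma)\,\dd\sigma .
\end{align}
Thus $L_{w_M}$ is a superposition of the dilations $f\mapsto f(\sigma\,\cdot)$, weighted by $k$.

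For $p<\infty$, the dilation satisfies $\|f(\sigma\,\cdot)\|_{L^p(0,\infty)}=\sigma^{-1/p}\|f\|_{L^p(0,\infty)}$. Minkowski's integral inequality then gives
\begin{align}
\|L_{w_M} f\|_{L^p}\le \Bigl(\int_0^1 k(\sigma)\,\sigma^{-1/p}\,\dd\sigma\Bigr)\|f\|_{L^p}.
\end{align}
For $p=\infty$ the dilation is an isometry, and the bound holds with $\int_0^1 k(\sigma)\,\dd\sigma$. We may replace $f$ by $|f|$ throughout, so no sign or measurability issue arises beyond Tonelli.

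The only real step is to check that the constant
\begin{align}
C(p,M):=\int_0^1 \exp\bigl(M\sqrt{\log(1/\sigma)}\bigr)\,\sigma^{-1/p}\,\dd\sigma
\end{align}
is finite whenever $1/p<1$. Substituting $\sigma=e^{-r}$ with $r\in(0,\infty)$ turns it into
\begin{align}
C(p,M)=\int_0^\infty \exp\bigl(M\sqrt r-(1-\tfrac1p)r\bigr)\,\dd r .
\end{align}
Because $1-1/p>0$, the linear term dominates $M\sqrt r$, so the integral converges. Explicitly, Young's inequality gives $M\sqrt r\le \tfrac12(1-\tfrac1p)r+\tfrac{M^2}{2(1-1/p)}$, and hence
\begin{align}
C(p,M)\le \frac{2}{1-1/p}\,\exp\Bigl(\frac{M^2}{2(1-1/p)}\Bigr),
\end{align}
which depends only on $p$ and $M$. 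At $p=1$ the exponent $1-1/p$ vanishes, the integrand grows like $e^{M\sqrt r}$, and the integral diverges; this explains why $p>1$ is required.

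For the final claim, take $p=\infty$ and $f\equiv1$. Then $L_{w_M}1(t)=\int_0^1 k(\sigma)\,\dd\sigma\le C(\infty,M)$, that is, $\int_0^t w_M(t,s)\,\dd s\le C(M)\,t$. The step I would expect to be delicate is the integrability of $k$ near $\sigma=0$. It is settled by the observation that $\exp(M\sqrt{\log(1/\sigma)})$ grows more slowly than any power $\sigma^{-\epsilon}$, which is exactly the Young's inequality bound above, so no genuine obstacle remains.
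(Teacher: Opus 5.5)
Your proof is correct and follows the paper's argument essentially step for step: the rescaling $s=t\sigma$, Minkowski's integral inequality with the dilation factor $\sigma^{-1/p}$, and the substitution $\sigma=e^{-r}$, which reduces everything to the finiteness of $\int_0^\infty e^{M\sqrt r-(1-1/p)r}\,\dd r$. Your Young's-inequality bound $C(p,M)\le \frac{2}{1-1/p}\exp\bigl(\frac{M^2}{2(1-1/p)}\bigr)$ goes slightly further by making explicit the dependence on $p$ and $M$, which the paper only asserts.
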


\begin{proof}
After the change of variables $s=t\tau$, we write
\begin{align}
L_{w_M} f(t)
= \int_0^1 \exp\!\big( M\sqrt{|\log \tau|} \big)\, f(t\tau)\,\dd \tau.
\end{align}
By Minkowski’s integral inequality, for every $p\in(1,\infty]$,
\begin{align}
\|L_{w_M} f\|_{L^p(0,T)}
\le
\int_0^1 \exp\!\big( M\sqrt{|\log \tau|} \big)\,
\|f(t\tau)\|_{L^p(0,T)}\,\dd \tau.
\end{align}
A change of variables yields
\begin{align}
\|f(t\tau)\|_{L^p(0,T)}
= \tau^{-1/p}\|f\|_{L^p(0,\tau T)},
\end{align}
and therefore
\begin{align}
\|L_{w_M} f\|_{L^p(0,T)}
\le
\|f\|_{L^p(0,T)}
\int_0^1 \exp\!\big( M\sqrt{|\log \tau|} \big)\, \tau^{-1/p}\,\dd \tau.
\end{align}
Setting $\tau=e^{-y}$, we obtain
\begin{align}
\int_0^1 \exp\!\big( M\sqrt{|\log \tau|} \big)\, \tau^{-1/p}\,\dd \tau
=
\int_0^\infty e^{M\sqrt{y}} e^{-(1-1/p)y}\,\dd y,
\end{align}
which is finite for every $p>1$, since the exponential decay dominates the sublinear growth $e^{M\sqrt{y}}$.
\end{proof}

\subsection{Duality formulation}

We now exploit the distortion control provided by \eqref{eq:contrdist} to quantify the loss of regularity in time for the transport equation.

\begin{proposition}\label{prop:duality}
Under assumption \textup{(S2)}, let $\delta \rho := \rho_1 - \rho_2$ and $\delta u := u_1 - u_2$. Then $\delta \rho$ satisfies
\begin{align}\label{eq:negstab}
\begin{cases}
\partial_t \delta \rho + \Div (\delta \rho\, u_2) = -\Div (\rho_1\, \delta u), \\
\delta \rho(0)=0.
\end{cases}
\end{align}
Moreover, for every $t>0$ and every $g=g_1 g_2$, one has
\begin{align}
\int_{\R^2}\delta \rho(t,x)\,g(x)\,\dd x
=
\int_0^t\int_{\R^2} \rho_1\, \delta u (s,x)\cdot \nabla\bigl(\varphi_1(s,x)\,\varphi_2(s,x)\bigr)\,\dd x\,\dd s,
\end{align}
where, for $i=1,2$, $\varphi_i$ solves
\begin{align}\label{eq:duals}
\begin{cases}
\partial_s\varphi_i + u_2\cdot\nabla\varphi_i =0,\\
\varphi_i(t)=g_i.
\end{cases}
\end{align}
Furthermore, for every $p\in[1,\infty]$, $i=1,2$, and every measurable set $A\subset \R^2$, we have
\begin{align}
\|\varphi_i(s)\|_{L^p(A)}
&\le
\|g_i\|_{L^p(X_2(t,s,A))},\\
\|\nabla \varphi_i(s)\|_{L^p(\R^2)}
&\le
w_{M_2}(t,s)\,\|\nabla g_i\|_{L^p(\R^2)},
\end{align}
where $M_2>0$ is the constant associated with $u_2$ in \cref{cor:contrdist}.
\end{proposition}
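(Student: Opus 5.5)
The proof has three parts: derive the equation for $\delta\rho$, pair it with the product of two backward transported test functions, and read off the bounds on $\varphi_i$ from their Lagrangian representation.

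\textbf{Equation for $\delta\rho$.} Write the two mass equations $\partial_t\rho_i+\Div(\rho_i u_i)=0$ and subtract them. Using $\rho_1u_1-\rho_2u_2=\delta\rho\,u_2+\rho_1\,\delta u$ gives \eqref{eq:negstab}. The initial condition $\delta\rho(0)=0$ holds because both solutions start from the same $\rho_0$.

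\textbf{Duality identity.} Solve \eqref{eq:duals} backward from time $t$ by characteristics:
\[
\varphi_i(s,x)=g_i\bigl(X_2(t,s,x)\bigr),
\]
where $X_2(t,s,\cdot)$ is the flow of $u_2$ from time $s$ to time $t$. This is well defined because $u_2$ is divergence free and $\nabla u_2\in L^1_{\loc}((0,\infty);L^\infty)$, by the estimate $\int_s^t\|\nabla u_2\|_{L^\infty}\le C\sqrt{\log(t/s)}$ from the proof of \cref{prop:composedflow}. The product $\Phi=\varphi_1\varphi_2$ is again a transport solution: $\partial_s\Phi+u_2\cdot\nabla\Phi=0$ with $\Phi(t)=g_1g_2=g$.

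Test \eqref{eq:negstab} against $\Phi$ on $(0,t)$ and use $\Div u_2=0$:
\[
\frac{d}{ds}\int\delta\rho\,\Phi=\int\delta\rho\,(\partial_s\Phi+u_2\cdot\nabla\Phi)+\int\rho_1\,\delta u\cdot\nabla\Phi=\int\rho_1\,\delta u\cdot\nabla\Phi .
\]
Integrating from $0$ to $t$ and using $\delta\rho(0)=0$ gives the identity. To make this rigorous, first take $g_i$ smooth, or at least Lipschitz and bounded, and mollify. Then justify the weak formulation up to $s=0$: $\Phi$ stays Lipschitz on $[\varepsilon,t]$, so run the computation on $[\varepsilon,t]$ and let $\varepsilon\to0$.

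This limit is the main obstacle, since $\|\nabla\Phi(s)\|_{L^\infty}$ may blow up like $w_{M_2}(t,s)$ as $s\to0$. Two facts close it. First, $\int_0^t w_{M_2}(t,s)\,ds\le Ct$ by \cref{lem:weighted}. Second, $\rho_1\delta u\in L^2_tL^2_x$, or $L^\infty_tL^2_x$ by the energy bound. Together these make the right-hand side absolutely integrable near $s=0$. The boundary term $\int\delta\rho(\varepsilon)\Phi(\varepsilon)$ tends to zero because $\delta\rho(\varepsilon)\to0$ in $L^p_{\loc}$ by \cref{thm:existL2} ii), while $\Phi(\varepsilon)$ is bounded and the relevant supports are bounded.

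\textbf{Bounds on $\varphi_i$.} Both follow from the representation $\varphi_i(s)=g_i\circ X_2(t,s,\cdot)$.

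For the $L^p$ bound, $X_2(t,s,\cdot)$ is measure preserving, so
\[
\|\varphi_i(s)\|_{L^p(A)}=\|g_i\circ X_2(t,s,\cdot)\|_{L^p(A)}=\|g_i\|_{L^p(X_2(t,s,A))}.
\]
This is an equality, and it also holds for $p=\infty$.

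For the gradient bound, the chain rule gives $\nabla\varphi_i(s)=(\nabla X_2(t,s,\cdot))^{T}\,(\nabla g_i)\circ X_2(t,s,\cdot)$. Hence
\[
|\nabla\varphi_i(s,x)|\le\|\nabla X_2(t,s,\cdot)\|_{L^\infty}\,|\nabla g_i(X_2(t,s,x))| .
\]
Take the $L^p$ norm, use measure preservation again, and apply \cref{cor:contrdist}. This yields the factor $w_{M_2}(t,s)$.
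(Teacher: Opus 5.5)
Your proposal is correct and follows essentially the same route as the paper: duality against the backward transport solution $\varphi_1\varphi_2$ of $u_2$, measure preservation of $X_2(t,s,\cdot)$ for the $L^p$ bounds, and the chain rule combined with \cref{cor:contrdist} for the gradient bounds. The only difference is that you obtain the factorization directly from the characteristic representation, where the paper invokes uniqueness for the transport equation; your $\varepsilon\to0$ argument also fills in what the paper just calls a ``standard duality argument''. One small correction there: a uniform bound on $\|\rho_1 u_2(s)\|_{L^2}$ near $s=0$ does not follow from the energy of $u_2$ alone. It follows from \cref{lem:Lpstrip} applied to $u_2$ on $D+B_{N\sqrt{s}}(0)\supset\supp\rho_1(s)$.
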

\begin{proof}
The proof is based on duality. Fix $t>0$, and let $\varphi$ solve the backward transport equation on $(0,t)$:
\begin{align}
\begin{cases}
\partial_s\varphi + u_2\cdot\nabla\varphi =0,\\
\varphi(t)=g_1 g_2.
\end{cases}
\end{align}
Since $\Div u_2=0$, a standard duality argument yields
\begin{align}
\int_{\R^2}\delta \rho(t,x)\,g(x)\,\dd x
=
\int_0^t\int_{\R^2} \rho_1 \delta u(s,x)\cdot \nabla\varphi(s,x)\,\dd x\,\dd s.
\end{align}
Moreover, by uniqueness of solutions to the transport equation, we have the factorization
\begin{align}
\varphi = \varphi_1\,\varphi_2,
\end{align}
where $\varphi_i$ solves \eqref{eq:duals} for $i=1,2$. For the $L^p$ estimate, using that $X_2(t,s,\cdot)$ is measure-preserving, we obtain for $i=1,2$
\begin{align}
\|\varphi_i(s)\|_{L^p(A)}
\le
\|\varphi_i(t)\|_{L^p(X_2(t,s,A))}
=
\|g_i\|_{L^p(X_2(t,s,A))}.
\end{align}
For the gradient, the standard flow estimate gives
\begin{align}
\|\nabla\varphi_i(s)\|_{L^{p}(\R^2)}
\le
\|\nabla g_i\|_{L^{p}(\R^2)}
\|\nabla X_2(t,s,\cdot)\|_{L^\infty(\R^2)}
\leq
w_{M_2}(t,s)\,\|\nabla g_i\|_{L^{p}(\R^2)}.
\end{align}
This concludes the proof.
\end{proof}
\section{Proof of \cref{thm:A}}\label{sec:Sec4}
The proof proceeds in steps, utilizing a relative energy argument inspired by \cite{CrinBaratSkondricViolini2025} to show that two solutions with identical initial data coincide. Since the low-regularity regime prevents directly using one solution as a test function for the other, we follow \cite{Skondric2025} by regularizing one solution via mollification and passing to the limit at the end. This technical setup is covered in Step 1; readers familiar with the technique may start directly from Step 2.

\subsection*{Step 1.}
Assume that $(\rho,u)$ is an immediately strong solution arising from initial data $(\rho_0,u_0)$. Then, by definition,
\begin{equation}
\sup_{i\in\{0,1,2,3\}} A_i(\rho,u) \le C_u.
\end{equation}
Let $u_{0,n} := u_0 \ast \varphi_n$, where $\varphi_n$ is a standard mollifier. By \cref{thm:existL2} (and specifically \cite[Theorem 2.2]{SkondricViolini2026}), there exists an immediately strong solution $(\rho_n,u_n)$ with initial data $(\rho_0,u_{0,n})$. Moreover, this solution satisfies
\begin{equation}
\sup_{i\in\{0,1,2,3\}} A_i(\rho_n,u_n) \le C_{u_n},
\end{equation}
where $C_{u_n}$ depends non-decreasingly on $\|\sqrt{\rho_0}\,u_{0,n}\|_{L^2(\R^2)}$. Since $\rho_0 \le 1$, we have 
\begin{equation}
\|\sqrt{\rho_0}\,u_{0,n}\|_{L^2(\R^2)}
\le
\|u_{0,n}\|_{L^2(\R^2)}
\le 
\|u_0\|_{L^2(\R^2)}.
\end{equation}
Consequently, there exists a uniform constant $\widetilde{C}>0$, independent of $n$, such that
\begin{equation}
\sup_{i\in\{0,1,2,3\}} A_i(\rho_n,u_n) \le \widetilde{C}.
\end{equation}

Fix $T>0$. Since $(\rho_n,u_n)$ is associated with a smooth initial velocity, the relative energy inequality holds for almost every $t\in(0,T)$ (see \cite[Lemma 4.1]{CrinBaratSkondricViolini2025}):
\begin{equation}\label{eq:relatenergyn}
\begin{aligned}
\|\sqrt{\rho}\,&(u-u_n)(t)\|_{L^2(\R^2)}^2
+ \int_0^t \|\nabla (u-u_n)(s)\|_{L^2(\R^2)}^2 \,\mathrm{d}s
\le
\|\sqrt{\rho_0}\,(u_0-u_{0,n})\|_{L^2(\R^2)}^2 \\
&\quad + \int_0^t \left| \int_{\R^2} \rho\, (u-u_n)\otimes (u-u_n): \nabla u_n \,\mathrm{d}x \right| \mathrm{d}s + \int_0^t \left| \int_{\R^2} (\rho - \rho_n)\, \dot u_n \cdot (u-u_n)\, \mathrm{d}x \right| \mathrm{d}s.
\end{aligned}
\end{equation}

Our main goal is to establish the existence of a constant $C = C(C_u,\widetilde{C},T,D)>0$ such that, for almost every $t\in(0,T)$,
\begin{equation}\label{eq:thesis}
\|\sqrt{\rho}\,(u-u_n)(t)\|_{L^2(\R^2)}^2
+ \int_0^t \|\nabla (u-u_n)(s)\|_{L^2(\R^2)}^2 \,\mathrm{d}s
\le
C\, \|\sqrt{\rho_0}\,(u_0-u_{0,n})\|_{L^2(\R^2)}^2.
\end{equation}
We postpone the proof of \eqref{eq:thesis} to the subsequent steps and show first how it implies uniqueness.

To upgrade the control provided by \eqref{eq:thesis} to a local $L^2$-convergence, we combine it with \cref{lem:Lpstrip} and a Poincar\'e-type argument. By applying \cref{lem:Lpstrip} with $R=0$, we find a constant $C=C(C_u,T,D)>0$ such that for every measurable function $v$ and every $t\in(0,T)$,
\begin{equation}\label{eq:uL2DD}
\|v\|_{L^2(D)}
\le
C \left(
t^{1/2}\|\nabla v\|_{L^2(\R^2)}
+
\|\sqrt{\rho(t)}\,v\|_{L^2(\R^2)}
\right).
\end{equation}
Fix $R_0>0$ such that $D\subset B_{R_0}(0)$, and let $R>R_0$. For any open set $\Omega$, denote the integral average by $(v)_{\Omega} := |\Omega|^{-1} \int_{\Omega} v(x)\,\mathrm{d}x$. By Poincar\'e's inequality on $B_R$, we have
\begin{equation}
\|v\|_{L^2(B_R)}
\lesssim_R
\|\nabla v\|_{L^2(B_R)}
+
|(v)_{B_R}|.
\end{equation}
Moreover, adding and subtracting the average over $D$, we estimate
\begin{equation}
|(v)_{B_R}|
\le |(v)_{B_R} - (v)_D| + |(v)_D|
\lesssim_{R,D}
\|\nabla v\|_{L^2(B_R)}
+
\|v\|_{L^2(D)}.
\end{equation}
Combining these estimates with \eqref{eq:uL2DD}, we obtain for $t\in(0,T)$ the bound
\begin{equation}\label{eq:Extpoincare}
\|v\|_{L^2(B_R)}
\lesssim_{C_u,T,D,R}
\|\nabla v\|_{L^2(\R^2)}
+
\|\sqrt{\rho(t)}\,v\|_{L^2(\R^2)}.
\end{equation}
Since $u_{0,n} \to u_0$ in $L^2$, the right-hand side of \eqref{eq:thesis} converges to zero. Therefore, evaluating \eqref{eq:thesis} yields
\begin{equation}
\lim_{n\to \infty} \sup_{t \in (0,T)} \|\sqrt{\rho(t)}\,(u(t)-u_n(t))\|_{L^2(\R^2)}^2 = 0,
\qquad
\lim_{n\to \infty} \int_0^T \|\nabla (u-u_n)(t)\|_{L^2(\R^2)}^2 \,\mathrm{d}t = 0.
\end{equation}
Applying \eqref{eq:Extpoincare} to the difference $v=(u-u_n)(t)$ and integrating over $(0,T)$, we conclude
\begin{equation}
\lim_{n\to \infty} \int_0^T \|(u-u_n)(t)\|_{L^2(B_R)}^2 \,\mathrm{d}t = 0,
\end{equation}
which implies
\begin{equation}
u_n \to u \quad \text{in } L^2(0,T;L^2_{\loc}(\R^2)).
\end{equation}
Finally, if $(\tilde\rho,\tilde u)$ is another immediately strong solution originating from the same initial data, the exact same procedure yields
\begin{equation}
u_n \to \tilde u \quad \text{in } L^2(0,T;L^2_{\loc}(\R^2)).
\end{equation}
By uniqueness of the limit, $u=\tilde u$ almost everywhere in $(0,T)\times\R^2$. Since $T>0$ was chosen arbitrarily, we deduce
\begin{equation}
u=\tilde u \qquad \text{a.e. in } (0,\infty)\times\R^2.
\end{equation}
Consequently, $\rho=\tilde\rho$ almost everywhere. This follows from the uniqueness of bounded solutions to the transport equation with a divergence-free Sobolev velocity field, as established by the DiPerna-Lions theory \cite{DipLi89}.
\subsection*{Step 2.}

We now turn to the proof of \eqref{eq:thesis}. To simplify the notation, we relabel the solutions as follows.

We denote the immediately strong solution $(\rho,u)$ by $(\rho_1,u_1)$, with associated constant $C_{u_1}$. The mollified solution $(\rho_n,u_n)$ is denoted by $(\rho_2,u_2)$, with associated constant $C_{u_2}$ (previously denoted by $\widetilde{C}$).

We set
\begin{align}
\delta u := u_1 - u_2,
\qquad
\delta \rho := \rho_1 - \rho_2.
\end{align}
Moreover, we introduce
\begin{align}
h(t) := \sup_{s \in (0,t)} \|\sqrt{\rho_1(s)}\,\delta u(s)\|_{L^2(\R^2)},
\qquad
d(t) := \|\nabla \delta u(t)\|_{L^2(\R^2)}.
\end{align}

In what follows, we do not track explicitly the dependence of constants on $C_{u_1}$, $C_{u_2}$, $T$, and $D$. We write $\lesssim$ to denote inequalities up to such constants, and use $C$ to denote generic constants depending only on these quantities. Any additional dependence will be specified explicitly.

We will repeatedly use that, by \cref{thm:existL2}, there exists a constant $N=N(C_{u_1},C_{u_2},T,D)>0$ such that, for all $t \ge 0$,
\begin{align}
\supp \rho_i(t) \subset D + B_{N\sqrt{t}}(0) =: D_{N,t},
\qquad i=1,2.
\end{align}
With this notation, the relative energy inequality \eqref{eq:relatenergyn} reads
\begin{align}\label{eq:relatenergy12}
h(T)^2
+ \int_0^T d(t)^2 \,\dd t
\le
h(0)^2
+ \int_0^T P(t)\,\dd t
+ \int_0^T N(t)\,\dd t,
\end{align}
where
\begin{align}
P(t)
&:= \int_{\R^2} \rho_1(t,x)\, \delta u(t,x) \otimes \delta u(t,x): \nabla u_2(t,x)\, \dd x, \\
N(t)
&:= \int_{\R^2} \delta \rho(t,x)\, \dot u_2(t,x) \cdot \delta u(t,x)\, \dd x.
\end{align}
We now estimate the terms $P(t)$ and $N(t)$ and close \eqref{eq:relatenergy12} by Grönwall’s inequality.

\subsection*{Step 3.}

We start with $P(t)$. By \cref{lem:cruc}, in particular \eqref{eq:comp_L4} with $X=X_1$, for a.e.~$t\in(0,T)$ and every $f$,
\begin{align}
\|f\|_{L^4(X_1(t,D))}
\lesssim
t^{1/4}\|\nabla f\|_{L^2(\R^2)}
+ \|f\|_{L^2(D)}^{1/2}\|\nabla f\|_{L^2(\R^2)}^{1/2}
+ \|f\|_{L^2(D)}.
\end{align}
Since $\rho_1(t)=\mathbf{1}_{X_1(t,D)}$, this yields
\begin{align}
\|\rho_1(t)f\|_{L^4(\R^2)}=\|f\|_{L^4(X_1(t,D))}.
\end{align}
Combining with \cref{lem:Lpstrip} (with $R=0$),
\begin{align}
\|f\|_{L^2(D)}
\lesssim
t^{1/2}\|\nabla f\|_{L^2(\R^2)}
+
\|\rho_1(t)f\|_{L^2(\R^2)},
\end{align}
we obtain
\begin{align}
\|\rho_1(t)f\|_{L^4(\R^2)}
\lesssim
t^{1/4}\|\nabla f\|_{L^2(\R^2)}
+ \|\rho_1(t)f\|_{L^2}^{1/2}\|\nabla f\|_{L^2}^{1/2}
+ \|\rho_1(t)f\|_{L^2}.
\end{align}
Applying this with $f=\delta u(t)$ gives
\begin{align}\label{eq:fourest}
\|\rho_1\delta u(t)\|_{L^4(\R^2)}
\lesssim
t^{1/4}d(t)
+ h(t)^{1/2}d(t)^{1/2}
+ h(t).
\end{align}
By Hölder’s inequality,
\begin{align}
P(t)
\le
\|\sqrt{\rho_1}\delta u(t)\|_{L^4}
\,
\|\sqrt{\rho_1}\delta u(t)\,\nabla u_2(t)\|_{L^{4/3}}.
\end{align}
Using \eqref{eq:fourest}, we write
\begin{align}
P(t) \lesssim P_1(t)+P_2(t)+P_3(t),
\end{align}
where
\begin{align}
P_1(t)
&= t^{1/4}d(t)
\|\sqrt{\rho_1}\delta u(t)\,\nabla u_2(t)\|_{L^{4/3}}, \\
P_2(t)
&= h(t)^{1/2}d(t)^{1/2}
\|\sqrt{\rho_1}\delta u(t)\,\nabla u_2(t)\|_{L^{4/3}}, \\
P_3(t)
&= h(t)
\|\sqrt{\rho_1}\delta u(t)\,\nabla u_2(t)\|_{L^{4/3}}.
\end{align}
For $P_1$, by Hölder and Young, for any $\eps>0$
\begin{align}
P_1(t)
\le
t^{1/4}d(t)
\|\sqrt{\rho_1}\delta u(t)\|_{L^2}
\|\nabla u_2(t)\|_{L^4}
\le
\varepsilon d(t)^2
+ C_\varepsilon h(t)^2 t^{1/2}\|\nabla u_2(t)\|_{L^4}^2.
\end{align}
For $P_2$, using again \eqref{eq:fourest} and Young,
\begin{align}
P_2(t)
&\lesssim
h(t)^{1/2}d(t)^{1/2}
\left(
t^{1/4}d(t)
+ h(t)^{1/2}d(t)^{1/2}
+ h(t)
\right)
\|\nabla u_2(t)\|_{L^2} \\
&\le
\varepsilon d(t)^2
+ C_\varepsilon h(t)^2
\left(
t\|\nabla u_2(t)\|_{L^2}^4
+ \|\nabla u_2(t)\|_{L^2}^2
+ \|\nabla u_2(t)\|_{L^2}^{4/3}
\right).
\end{align}
For $P_3$,
\begin{align}
P_3(t)
\le
h(t)^2 \|\nabla u_2(t)\|_{L^4}.
\end{align}
Collecting the above estimates, we conclude that for every $\varepsilon>0$ and almost every $t \in (0,T)$
\begin{align}\label{eq:Pest}
P(t)
\le
\varepsilon d(t)^2
+ C_\varepsilon h(t)^2 \gamma_1(t),
\end{align}
where
\begin{align}\label{eq:gamma1}
\gamma_1(t)
:=\;
t^{1/2}\|\nabla u_2(t)\|_{L^4}^2
+ t\|\nabla u_2(t)\|_{L^2}^4
+ \|\nabla u_2(t)\|_{L^2}^2
+ \|\nabla u_2(t)\|_{L^2}^{4/3}
+ \|\nabla u_2(t)\|_{L^4}.
\end{align}
\subsection*{Step 4.}

We now estimate the second term $N(t)$. By \cref{prop:duality}, applied with
\begin{align}
g_1 = \dot u_2(t), \qquad g_2 = \delta u(t),
\end{align}
we obtain
\begin{align}
N(t)
=
\int_0^t \int_{\R^2} \rho_1(s,x)\, \delta u(s,x)\cdot \nabla\bigl(\varphi_1(s,x)\,\varphi_2(s,x)\bigr)\,\dd x\,\dd s,
\end{align}
where, for $i=1,2$, $\varphi_i$ solves the backward transport equation associated with $g_i$.

Moreover, for every $p\in[1,\infty]$, $i=1,2$, and every measurable set $A\subset \R^2$, we have
\begin{align}
\begin{aligned}\label{eq:phi1}
\|\varphi_1(s)\|_{L^p(A)}
&\le
\|\dot u_2(t)\|_{L^p(X_2(t,s,A))}, \\
\|\nabla \varphi_1(s)\|_{L^p(\R^2)}
&\le
w_{M_2}(t,s)\,\|\nabla \dot u_2(t)\|_{L^p(\R^2)},
\end{aligned}
\end{align}
and
\begin{align}
\begin{aligned}\label{eq:phi2}
\|\varphi_2(s)\|_{L^p(A)}
&\le
\|\delta u(t)\|_{L^p(X_2(t,s,A))}, \\
\|\nabla \varphi_2(s)\|_{L^p(\R^2)}
&\le
w_{M_2}(t,s)\,\|\nabla \delta u(t)\|_{L^p(\R^2)}.
\end{aligned}
\end{align}
Here $M_2>0$ is the constant associated with $u_2$ in \cref{cor:contrdist}, and $w_{M_2}$ is defined in \eqref{eq:qLL}.

We decompose
\begin{align}
N(t) \le N_1(t) + N_2(t),
\end{align}
where
\begin{align}
N_1(t)
=
\left| \int_0^t \int_{\R^2} \rho_1\, \delta u \cdot \varphi_1 \nabla \varphi_2 \,\dd x\, \dd s \right|, \quad
N_2(t)=
\left| \int_0^t \int_{\R^2} \rho_1\, \delta u \cdot \varphi_2 \nabla \varphi_1 \,\dd x\, \dd s \right|.
\end{align}
We begin with $N_1(t)$. By Hölder's inequality, the fact that $\rho_1(s)$ is supported on $X_1(s,D)$, and \eqref{eq:phi1}--\eqref{eq:phi2}, we obtain
\begin{align}
N_1(t)
&\le
\int_0^t
\|\sqrt{\rho_1}\,\delta u(s)\|_{L^2(\R^2)}
\|\varphi_1(s)\|_{L^\infty(X_1(s,D))}
\|\nabla \varphi_2(s)\|_{L^2(\R^2)} \,\dd s \\
&\le
\int_0^t
h(s)\,
\|\dot u_2(t)\|_{L^\infty(X_2(t,s,X_1(s,D)))}
\, w_{M_2}(t,s)\, d(t)\,\dd s.
\end{align}
Using the definition of $h$, we infer
\begin{align}
N_1(t)
\le
d(t)\, h(t)
\int_0^t
\|\dot u_2(t)\|_{L^\infty(X_2(t,s,X_1(s,D)))}
\, w_{M_2}(t,s)\, \dd s.
\end{align}
By \cref{prop:composedflow}, the composed flow
\begin{align}
Y_s(t,x):=X_2\bigl(t,s,X_1(s,x)\bigr)
\end{align}
satisfies, for a.e.~$t\in(0,T)$ and $x\in D$,
\begin{align}\label{eq:Yprop}
|Y_s(t,x)-x|\le \hat{C}\sqrt{t},
\qquad
\text{hence }
\quad
X_2(t,s,X_1(s,D)) \subset D_{\hat{C},t}.
\end{align}
Therefore,
\begin{align}
\int_0^t
\|\dot u_2(t)\|_{L^\infty(X_2(t,s,X_1(s,D)))}
\, w_{M_2}(t,s)\,\dd s
\le
\|\dot u_2(t)\|_{L^\infty(D_{\hat{C},t})}
\int_0^t w_{M_2}(t,s)\,\dd s.
\end{align}
By \cref{lem:weighted},
\begin{align}\label{eq:w2weigh}
\int_0^t w_{M_2}(t,s)\,\dd s \lesssim t,
\end{align}
and thus
\begin{align}
N_1(t)
\lesssim
d(t)\, h(t)\, t \|\dot u_2(t)\|_{L^\infty(D_{\hat{C},t})}.
\end{align}
By Young's inequality,
\begin{align}\label{eq:N1est}
N_1(t)
\le
\varepsilon d(t)^2
+
C_\varepsilon h(t)^2\, t^2 \|\dot u_2(t)\|_{L^\infty(D_{\hat{C},t})}^2.
\end{align}
We now turn to $N_2(t)$. By Hölder's inequality and \eqref{eq:phi2}, we write
\begin{align}
N_2(t)
\le
\int_0^t
\|\rho_1 \delta u(s)\nabla \varphi_1(s)\|_{L^{4/3}}
\|\delta u(t)\|_{L^4(Y_s(t,D))} \,\dd s.
\end{align}
Using \cref{lem:cruc} and, in particular, \cref{rem:crucial}, we can apply the estimate \eqref{eq:comp_L4} with the flow $Y_s$ and $f=\delta u(t)$. Since \eqref{eq:Yprop} ensures that $Y_s$ satisfies the required displacement bound, we obtain
\begin{equation}
\|\delta u(t)\|_{L^4(Y_s(t,D))}
\le C \Big(
t^{1/4}\|\nabla \delta u(t)\|_{L^2(\R^2)}
+ \|\delta u(t)\|_{L^2(D)}^{1/2}\|\nabla \delta u(t)\|_{L^2(\R^2)}^{1/2}
+ \|\delta u(t)\|_{L^2(D)}
\Big).
\end{equation}
Arguing as in \eqref{eq:fourest}, we deduce
\begin{align}\label{eq:fourest2}
\|\delta u(t)\|_{L^4(Y_s(t,D))}
\lesssim
t^{1/4}d(t)
+ h(t)^{1/2}d(t)^{1/2}
+ h(t).
\end{align}
Inserting \eqref{eq:fourest2}, we decompose
\begin{align}
N_2(t) \lesssim N_{2,1}(t)+N_{2,2}(t)+N_{2,3}(t),
\end{align}
with the obvious definitions.

\emph{Estimate of $N_{2,1}$.}
By Hölder, \eqref{eq:phi1} and \eqref{eq:w2weigh}
\begin{align}
N_{2,1}(t)
&\lesssim
t^{1/4}d(t)
\int_0^t
\|\rho_1 \delta u(s)\|_{L^2}
\|\nabla \varphi_1(s)\|_{L^4}\,\dd s \lesssim
t^{1/4}d(t) h(t)
\|\nabla \dot u_2(t)\|_{L^4}
\int_0^t w_{M_2}(t,s)\,\dd s \\
&\lesssim
t^{5/4}d(t) h(t)
\|\nabla \dot u_2(t)\|_{L^4}.
\end{align}
Hence,
\begin{align}
N_{2,1}(t)
\le
\varepsilon d(t)^2
+
C_\varepsilon h(t)^2\, t^{5/2}
\|\nabla \dot u_2(t)\|_{L^4}^2.
\end{align}

\emph{Estimate of $N_{2,3}$.}
Arguing similarly,
\begin{align} 
N_{2,3}(t) \leq h(t) \int_0^t
\|\rho_1 \delta u(s)\nabla \varphi_1(s)\|_{L^{4/3}}
 \,\dd s
\lesssim
h(t)^2\, t \|\nabla \dot u_2(t)\|_{L^4}.
\end{align}

\emph{Estimate of $N_{2,2}$.}
Using Hölder and \eqref{eq:phi1},
\begin{align}
N_{2,2}(t) &\lesssim  h(t)^{1/2}d(t)^{1/2}
 \int_0^t
\|\rho_1 \delta u(s)\|_{L^{4}} \|\nabla \varphi_1(s)\|_{L^{2}}
\,\dd s. \\ & \lesssim
h(t)^{1/2}d(t)^{1/2}
\|\nabla \dot u_2(t)\|_{L^2}
\int_0^t w_{M_2}(t,s)
\|\rho_1 \delta u(s)\|_{L^{4}}\,\dd s.
\end{align}
Applying \eqref{eq:fourest}, we obtain
\begin{align}
N_{2,2}(t)
\lesssim
h(t)^{1/2}d(t)^{1/2}
\|\nabla \dot u_2(t)\|_{L^2}
\int_0^t w_{M_2}(t,s)
\Big(
t^{1/4}d(s)
+ h(s)^{1/2}d(s)^{1/2}
+ h(s)
\Big)\dd s.
\end{align}
Define
\begin{align}
(L_w g)(t) := \frac{1}{t}\int_0^t w_{M_2}(t,s)\,g(s)\,\dd s.
\end{align}
Then, again by \eqref{eq:w2weigh},
\begin{align}
\int_0^t w_{M_2}(t,s)(\cdots)\,\dd s
\lesssim
t^{5/4}(L_wd )(t)
+ t h(t)^{1/2}(L_w d^{1/2})(t)
+ t h(t).
\end{align}
Using Young's inequality,
\begin{align}
N_{2,2}(t)
\le
\varepsilon d(t)^2
+ \varepsilon (L_wd(t))^2
+ \varepsilon (L_wd^{1/2}(t))^4
+ C_\varepsilon h(t)^2 \gamma_2(t),
\end{align}
where
\begin{align}\label{eq:gamma2}
\gamma_2(t)
:=
t^{5/2}\|\nabla \dot u_2(t)\|_{L^4}^2
+ t \|\nabla \dot u_2(t)\|_{L^4}
+ t^5 \|\nabla \dot u_2(t)\|_{L^2}^4
+ t^2 \|\nabla \dot u_2(t)\|_{L^2}^2
+ t^{4/3} \|\nabla \dot u_2(t)\|_{L^2}^{4/3}.
\end{align}
Collecting estimates for $N_{2,1}$, $N_{2,2}$ and $N_{2,3}$, we conclude
\begin{align}\label{eq:N2est}
N_2(t)
\le
\varepsilon d(t)^2
+ \varepsilon (L_wd(t))^2
+ \varepsilon (L_wd^{1/2}(t))^4
+ C_\varepsilon h(t)^2 \gamma_2(t).
\end{align}
\subsection*{Step 5.}

Collecting the estimates for $P$, $N_1$, and $N_2$ (see \eqref{eq:Pest}, \eqref{eq:N1est}, and \eqref{eq:N2est}), we obtain
\begin{align}
P(t) + N_1(t) + N_2(t)
\le
\varepsilon d(t)^2
+ \varepsilon (L_w d(t))^2
+ \varepsilon (L_w d^{1/2}(t))^4
+ h(t)^2\, \gamma(t),
\end{align}
where
\begin{align}
\gamma(t)
:=
\gamma_1(t)
+ t^2 \|\dot u_2(t)\|_{L^\infty(D_{\hat{C},t})}^2
+ \gamma_2(t).
\end{align}
Assuming that $\gamma \in L^1(0,T)$, inserting this estimate into \eqref{eq:relatenergy12} yields
\begin{align}\label{eq:relatenergy3}
h(T)^2
+ \int_0^T d(t)^2 \,\dd t
&\le
h(0)^2
+ \varepsilon \int_0^T d(t)^2 \dd t
+ \varepsilon \int_0^T (L_w d(t))^2 \dd t
 \\ & + \varepsilon \int_0^T (L_w d^{1/2}(t))^4 \dd t 
+ C_\varepsilon \int_0^T h(t)^2\, \gamma(t)\,\dd t.
\end{align}
Since $L_w$ is bounded on $L^p(0,T)$ (see \cref{lem:weighted}), we have
\begin{align}
\int_0^T (L_w d(t))^2 \dd t
\le C \int_0^T d(t)^2 \dd t,
\end{align}
and similarly
\begin{align}
\int_0^T (L_w d^{1/2}(t))^4 \dd t
\le C \int_0^T d(t)^2 \dd t.
\end{align}
Therefore,
\begin{align}
h(t)^2
+ \int_0^T d(t)^2 \dd t
\le
h(0)^2
+ \varepsilon(1+2C) \int_0^T d(t)^2 \dd t
+ C_\varepsilon \int_0^T h(t)^2\, \gamma(t)\,\dd t.
\end{align}
Choosing $\varepsilon>0$ sufficiently small, we can absorb the second term on the left-hand side and obtain
\begin{align}\label{eq:relatenergy4}
h(T)^2
+ \frac{1}{2}\int_0^T d(t)^2 \dd t
\le
h(0)^2
+ C_\varepsilon \int_0^T h(t)^2\, \gamma(t)\,\dd t.
\end{align}
An application of Grönwall's inequality yields
\begin{align}
h(T)^2
+ \frac{1}{2}\int_0^T d(t)^2 \dd t
\le
h(0)^2 \exp\!\left( \int_0^T \gamma(t)\,\dd t \right).
\end{align}
To conclude \eqref{eq:thesis}, it remains to show that $\gamma \in L^1(0,T)$.
\subsection*{Step 6.}

We first show that $\gamma_2 \in L^1(0,T)$, arguing term by term. For the first contribution, using Gagliardo--Nirenberg and Young's inequality, we write
\begin{align}\label{eq:bigBound}
\begin{aligned}
\int_0^T t^{5/2}\|\nabla \dot u_2(t)\|_{L^4}^2 \dd t
&\lesssim
\int_0^T t^{5/2}
\|\nabla \dot u_2(t)\|_{L^2}
\|\nabla^2 \dot u_2(t)\|_{L^2} \dd t \\
&\le
\int_0^T t^2\|\nabla \dot u_2(t)\|_{L^2}^2 \dd t
+
\int_0^T t^{3}\|\nabla^2 \dot u_2(t)\|_{L^2}^2 \dd t
\le C.
\end{aligned}
\end{align}
For the second term, using interpolation and Young's inequality,
\begin{align}
\int_0^T t \|\nabla \dot u_2(t)\|_{L^4} \dd t
&\lesssim
\int_0^T t^{-1/4}
\bigl(t\|\nabla \dot u_2(t)\|_{L^2}\bigr)^{1/2}
\bigl(t^{3/2}\|\nabla^2 \dot u_2(t)\|_{L^2}\bigr)^{1/2}
\dd t \\
&\lesssim
\int_0^T t^{-1/2} \dd t
+
\int_0^T t^2\|\nabla \dot u_2(t)\|_{L^2}^2 \dd t
+
\int_0^T t^3\|\nabla^2 \dot u_2(t)\|_{L^2}^2 \dd t
\le C.
\end{align}
For the third term, we use a supremum bound:
\begin{align}
\int_0^T t^5 \|\nabla \dot u_2(t)\|_{L^2}^4 \dd t
\lesssim
\sup_{t\in(0,T)} t^3 \|\nabla \dot u_2(t)\|_{L^2}^2
\int_0^T t^2 \|\nabla \dot u_2(t)\|_{L^2}^2 \dd t
\le C.
\end{align}
The fourth contribution is directly controlled by the energy bounds. For the last term, by Hölder's inequality,
\begin{align}
\int_0^T t^{4/3} \|\nabla \dot u_2(t)\|_{L^2}^{4/3} \dd t
\lesssim
\int_0^T t^2 \|\nabla \dot u_2(t)\|_{L^2}^2 \dd t
\le C.
\end{align}
This proves that $\gamma_2 \in L^1(0,T)$.

The strategy for $\gamma_1$ is analogous. It remains to show that
\begin{align}
\int_0^T t^2  \|\dot u_2(t)\|_{L^\infty(D_{\hat{C},t})}^2 \dd t \le C.
\end{align}
To this end, we use \cref{lem:Lpstrip}, which ensures that for every $v$ and $t\in(0,T)$,
\begin{align}
\|v\|_{L^\infty(D_{\hat{C},t})}
\le
C (
t^{\frac14}\|\nabla v\|_{L^4(\R^2)}
+t^{-\frac12}
\|\rho_2(t)\,v\|_{L^2(\R^2)}),
\end{align}
where $C$ depends on$C_{u_2}, T, D$ and  $\hat{C}$ $C_{u_2}$. Choosing $v = \dot u_2$, we obtain
\begin{align}
\int_0^T t^2 \|\dot u_2(t)\|_{L^\infty(D_{\hat{C},t})}^2 \dd t
\lesssim
\int_0^T t^{5/2}\|\nabla \dot u_2(t)\|_{L^4}^2 \dd t
+
\int_0^T t \|\rho_2(t)\dot u_2(t)\|_{L^2}^2 \dd t.
\end{align}
The second term is controlled by the energy bounds, while the first one has already been estimated in \eqref{eq:bigBound}. 

This concludes the proof that $\gamma \in L^1(0,T)$.

\section{Proof of \cref{thm:B}}

\textbf{Step A.} In this step, we recall several results from the literature that will be used throughout the proof.

Let $(\rho,u)$ be the unique immediately strong solution of \cref{thm:B} associated with initial data $(\rho_0,u_0)$ satisfying \eqref{eq:initial_data2}. Fix
\[
s\in \Bigl(0,\frac12\Bigr).
\]
By real interpolation (see \eqref{eq:realInt}), the initial velocity can be decomposed as
\[
u_0=\sum_{j\in\Z}u_{j,0},
\]
where $\{u_{j,0}\}_{j\in\Z}$ is a sequence of divergence-free atoms satisfying
\begin{align}\label{eq:atomic}
c_j
:=
2^{-j/2}\|u_{j,0}\|_{\dot H^s}
+
2^{j/2}\|u_{j,0}\|_{\dot H^{-s}},
\qquad
\|c_j\|_{\ell^2(\Z)}<\infty.
\end{align}
For each $j\in\Z$, consider the linearized system
\begin{align}\label{eq:linearizej}
\begin{cases}
\partial_t(\rho u_j)
+\Div(\rho u_j\otimes u)
+\nabla P
=
\Delta u_j,
\\
\Div u_j =0,
\\
u_j|_{t=0}=u_{j,0}.
\end{cases}
\end{align}

It is proved in \cite[Section~4 and Section~5]{SkondricViolini2026} that there exists a solution $u_j$ to \eqref{eq:linearizej} such that, for some constant $C>0$ depending only on $D$ and $\|u_0\|_{L^2}$, but independent of $j$ and $t>0$, the following estimates hold:
\begin{align}\label{eq:L2jdecays}
\begin{aligned}
\|\nabla u_j(t)\|_{L^2}
&\le
C\min\Bigl\{
t^{-1/2+s/2}\|u_{j,0}\|_{\dot H^s},
\,
t^{-1/2-s/2}\|u_{j,0}\|_{\dot H^{-s}}
\Bigr\},
\\
\|\sqrt{\rho}\,\dot u_j(t)\|_{L^2}
&\le
C\min\Bigl\{
t^{-1+s/2}\|u_{j,0}\|_{\dot H^s},
\,
t^{-1-s/2}\|u_{j,0}\|_{\dot H^{-s}}
\Bigr\},
\\
\|\nabla \dot u_j(t)\|_{L^2}
&\le
C\min\Bigl\{
t^{-3/2+s/2}\|u_{j,0}\|_{\dot H^s},
\,
t^{-3/2-s/2}\|u_{j,0}\|_{\dot H^{-s}}
\Bigr\},
\end{aligned}
\end{align}
where
\[
\dot u_j
=
\partial_tu_j+(u\cdot\nabla)u_j
\]
denotes the material derivative.

We also make use of the scale-invariant inequality introduced in \cite[Equation~(3.11)]{HaoShaoWeiZhang2026}, which allows one to control the Lipschitz norm using only $L^2$-based quantities:
\begin{align}\label{eq:Lipnorm}
\|\nabla u_j(t)\|_{L^\infty}
\lesssim
\|\sqrt{\rho}\,\dot u_j(t)\|_{L^2}
+
\|\nabla u_j(t)\|_{L^2}^{1/2}
\|\nabla \dot u_j(t)\|_{L^2}^{1/2}.
\end{align}
Combining \eqref{eq:L2jdecays} with \eqref{eq:Lipnorm}, we obtain
\begin{align}\label{eq:Linftyjdecays}
\|\nabla u_j(t)\|_{L^\infty}
\le
C\min\Bigl\{
t^{-1+s/2}\|u_{j,0}\|_{\dot H^s},
\,
t^{-1-s/2}\|u_{j,0}\|_{\dot H^{-s}}
\Bigr\}.
\end{align}
Since the constant $C$ is independent of $j$, interpolation between the first estimate in \eqref{eq:L2jdecays} and \eqref{eq:Linftyjdecays} yields, for every $p\in(2,\infty)$,
\begin{align}\label{eq:uj-Lp-min}
\|\nabla u_j(t)\|_{L^p}
\le
C\min\Bigl\{
t^{\frac{s}{2}+\frac1p-1}\|u_{j,0}\|_{\dot H^s},
\,
t^{-\frac{s}{2}+\frac1p-1}\|u_{j,0}\|_{\dot H^{-s}}
\Bigr\}.
\end{align}
Finally, we recall that
\begin{align}\label{eq:Dconv}
\sum_{|j|\le J}u_j
\to u
\qquad
\text{in }
\mathcal{D}'\bigl((0,\infty)\times\R^2\bigr)
\quad
\text{as }
J\to\infty,
\end{align}
which is proved in \cite[Section~4]{SkondricViolini2026}.

\medskip

\textbf{Step B.} Let $p>\frac{2}{s}$,
and define
\begin{align}
A(t)
:=
\sum_{j\in\Z}
(1+|j|)^{\eta-1}
\|\nabla u_j(t)\|_{L^\infty},
\qquad
B(t)
:=
\sum_{j\in\Z}
2^{\frac{|j|}{sp}}
(1+|j|)^{\eta-1}
\|\nabla u_j(t)\|_{L^p}.
\end{align}
Fix $T>0$. Throughout the proof, all implicit constants may depend on
\[
C,s,p,\eta,\text{ and }T.
\]
Using \eqref{eq:Dconv}, together with Morrey's inequality, we obtain that for every $N>0$,
\begin{align}
\begin{aligned}\label{eq:split-morrey-thm}
|u(t,x)-u(t,y)|
&\le
\sum_{|j|\le N}
|u_j(t,x)-u_j(t,y)|
+
\sum_{|j|>N}
|u_j(t,x)-u_j(t,y)|
\\
&\lesssim
|x-y|
\sum_{|j|\le N}
\|\nabla u_j(t)\|_{L^\infty}
+
|x-y|^{1-\frac2p}
\sum_{|j|>N}
\|\nabla u_j(t)\|_{L^p}.
\end{aligned}
\end{align}
Since $\eta\in(0,1)$, we have
\begin{align}\label{eq:lowfreq-thm}
\sum_{|j|\le N}\|\nabla u_j(t)\|_{L^\infty}
\le (1+N)^{1-\eta}
\sum_{|j|\le N}(1+|j|)^{\eta-1}\|\nabla u_j(t)\|_{L^\infty}
\le (1+N)^{1-\eta} A(t).
\end{align}
Moreover, since the function
\begin{align}
r\mapsto 2^{-r/(sp)}(1+r)^{1-\eta}
\end{align}
is eventually decreasing, we have
\begin{align}
2^{-\frac{|j|}{sp}}(1+|j|)^{1-\eta}
\lesssim
2^{-\frac{N}{sp}}(1+N)^{1-\eta}
\qquad
\text{for all } |j|>N.
\end{align}
Therefore,
\begin{align}
\begin{aligned}\label{eq:highfreq-thm}
\sum_{|j|>N}\|\nabla u_j(t)\|_{L^p}
&=
\sum_{|j|>N}
2^{-\frac{|j|}{sp}}(1+|j|)^{1-\eta}
\Bigl(
2^{\frac{|j|}{sp}}(1+|j|)^{\eta-1}
\|\nabla u_j(t)\|_{L^p}
\Bigr) \\
&\lesssim
2^{-\frac{N}{sp}}(1+N)^{1-\eta} B(t).
\end{aligned}
\end{align}
Combining \eqref{eq:lowfreq-thm}, \eqref{eq:highfreq-thm} with \eqref{eq:split-morrey-thm}, we obtain
\begin{align}\label{eq:pre-final}
|u(t,x)-u(t,y)|
\lesssim
(1+N)^{1-\eta}
\Big(
|x-y|\, A(t)
+
|x-y|^{1-\frac2p}2^{-\frac{N}{sp}} B(t)
\Big).
\end{align}
We now choose
\begin{align}
N:=\bigl\lfloor -\log_2 |x-y| \bigr\rfloor.
\end{align}
Then
\begin{align}
2^{-N}\le 2|x-y|.
\end{align}
With this choice of $N$, \eqref{eq:pre-final} yields
\begin{align}
|u(t,x)-u(t,y)|
&\lesssim
|x-y|\,(-\log |x-y|)^{1-\eta} A(t) \\
&\quad+
|x-y|^{1-\frac2p+\frac1{sp}}
(-\log |x-y|)^{1-\eta} B(t).
\nonumber
\end{align}
Since $s<1\slash 2$, we have
\begin{align}
1-\frac2p+\frac1{sp}>1.
\end{align}
Hence, recalling that $0<|x-y|<1$,
\begin{align}
|x-y|^{1-\frac2p+\frac1{sp}}
\le |x-y|.
\end{align}
Therefore,
\begin{align}\label{eq:AB-bound}
|u(t,x)-u(t,y)|
\lesssim
|x-y|\,(-\log |x-y|)^{1-\eta}\bigl(A(t)+B(t)\bigr).
\end{align}

\medskip
\textbf{Step C.} We are left to show that $A,B\in L^1(0,T)$. To this end, we claim that for every $q\in[2,\infty]$ with $s>2/q$,
\begin{align}\label{eq:unified}
\int_0^T \|\nabla u_j(t)\|_{L^q}\,dt
\lesssim 2^{-\frac{|j|}{sq}}\,c_j,
\qquad j\in\Z,
\end{align}
with the convention $1/\infty=0$.

Assuming \eqref{eq:unified}, we obtain
\begin{align}
\|B\|_{L^1(0,T)}
&\le \sum_{j\in\Z} 2^{\frac{|j|}{sp}}(1+|j|)^{\eta-1}
\int_0^T \|\nabla u_j(t)\|_{L^p}\,dt 
\lesssim \sum_{j\in\Z}(1+|j|)^{\eta-1}c_j.
\nonumber
\end{align}
Since $\eta<\frac12$, Cauchy--Schwarz yields
\begin{align}
\sum_{j\in\Z}(1+|j|)^{\eta-1}c_j
&\le
\Big(\sum_{j\in\Z}(1+|j|)^{2\eta-2}\Big)^{1/2}
\|c_j\|_{\ell^2(\Z)}
\lesssim
\|c_j\|_{\ell^2(\Z)}.
\end{align}
Hence,
\begin{align}
\|B\|_{L^1(0,T)} \lesssim \|c_j\|_{\ell^2(\Z)}.
\end{align}
Similarly, applying \eqref{eq:unified} with $q=\infty$, we obtain
\begin{align}
\|A\|_{L^1(0,T)}
\lesssim \sum_{j\in\Z}(1+|j|)^{\eta-1}c_j
\lesssim \|c_j\|_{\ell^2(\Z)}.
\end{align}
Therefore $A,B\in L^1(0,T)$.

It remains to prove \eqref{eq:unified}. Fix $j\in\Z$, and set $\tau:=2^{-j/s}$. Using \eqref{eq:uj-Lp-min}, we split the time integral:
\begin{align}
\int_0^T \|\nabla u_j(t)\|_{L^q}\,dt
&\lesssim
\int_0^\tau t^{\frac s2+\frac1q-1}\|u_{j,0}\|_{\dot H^s}\,dt
+
\int_\tau^\infty t^{-\frac s2+\frac1q-1}\|u_{j,0}\|_{\dot H^{-s}}\,dt \\
&\lesssim
\tau^{\frac s2+\frac1q}\|u_{j,0}\|_{\dot H^s}
+
\tau^{-\frac s2+\frac1q}\|u_{j,0}\|_{\dot H^{-s}} \\
&=
2^{-j\left(\frac12+\frac{1}{sq}\right)}\|u_{j,0}\|_{\dot H^s}
+
2^{j\left(\frac12-\frac{1}{sq}\right)}\|u_{j,0}\|_{\dot H^{-s}}.
\end{align}
Hence
\begin{align}
\int_0^T \|\nabla u_j(t)\|_{L^q}\,dt
\lesssim 2^{-j/(sq)}\,c_j.
\end{align}
If $j\ge0$, then $-j/(sq)=-|j|/(sq)$, which gives \eqref{eq:unified}. If $j<0$, we instead use only the first branch:
\begin{align}
\int_0^T \|\nabla u_j(t)\|_{L^q}\,dt
&\lesssim
\int_0^T t^{\frac s2+\frac1q-1}\|u_{j,0}\|_{\dot H^s}\,dt
\lesssim
\|u_{j,0}\|_{\dot H^s}
\lesssim
2^{j/2}c_j.
\end{align}
Since $j<0$ and $sq>2$, we have $j/2\le j/(sq)$, hence
\begin{align}
2^{j/2}\le 2^{j/(sq)}=2^{-|j|/(sq)}.
\end{align}
This proves \eqref{eq:unified}.

Finally, define
\begin{align}
\gamma(t):=C\bigl(A(t)+B(t)\bigr),
\end{align}
where $C>0$ is the implicit constant in \eqref{eq:AB-bound}. Since $A,B\in L^1(0,T)$ and $T>0$ is arbitrary, we have $\gamma\in L^1_{\loc}([0,\infty))$, and \eqref{eq:logLip2} follows from \eqref{eq:AB-bound}.

\subsection*{Acknowledgments}

The author is supported by the Deutsche Forschungsgemeinschaft (DFG) through the project \emph{Inhomogeneous and compressible fluids: statistical solutions and dissipative anomalies} within the SPP 2410 \emph{Hyperbolic Balance Laws in Fluid Mechanics: Complexity, Scales, Randomness} (CoScaRa).

The author thanks G. Crippa and S. \v{S}kondri\'c for helpful discussions. The author is also grateful to S. \v{S}kondri\'c for an early review of this manuscript.

\noindent \textbf{Data availability statement} \,
 Data sharing not applicable to this article as no data sets were generated or analyzed during the current study.

\noindent \textbf{Conflicts of interest} The authors have no competing interests to declare that are relevant to the content of this
article.

\bibliographystyle{plain} 
\bibliography{biblio}

@article{Abidi2007,
  author  = {H. Abidi},
  title   = {Équation de Navier–Stokes avec densité et viscosité variables dans l’espace critique},
  journal = {Rev. Mat. Iberoam.},
  volume  = {23},
  year    = {2007},
  pages   = {537--586}
}

@article{AbidiGuiZhang2012,
  author  = {H. Abidi and G. Gui and P. Zhang},
  title   = {On the well-posedness of 3-D inhomogeneous Navier–Stokes equations in the critical spaces},
  journal = {Arch. Ration. Mech. Anal.},
  volume  = {204},
  year    = {2012},
  pages   = {189--230}
}

@article{DanchinMuchaPiasecki2024,
  author  = {R. Danchin and P. B. Mucha and T. Piasecki},
  title   = {Stability of the density patches problem with vacuum for incompressible inhomogeneous viscous flows},
  journal = {Ann. Inst. H. Poincaré Anal. Non Linéaire},
  volume  = {41},
  year    = {2024},
  pages   = {897--931}
}

@article{CrinBaratDeNittiSkondricViolini2025,
  author  = {T. Crin-Barat and N. De Nitti and S. \v{S}kondri\'c and A. Violini},
  title   = {Regularity aspects of Leray--Hopf solutions to the 2D inhomogeneous Navier--Stokes system and applications to weak-strong uniqueness},
  journal = {Nonlinearity},
  volume  = {38},
  pages   = {125020},
  year    = {2025}
}

@article{Danchin2003,
  author  = {R. Danchin},
  title   = {Density-dependent incompressible viscous fluids in critical spaces},
  journal = {Proc. Roy. Soc. Edinburgh Sect. A},
  volume  = {133},
  year    = {2003},
  pages   = {1311--1334}
}

@article{DanchinMucha2012,
  author  = {R. Danchin and P. B. Mucha},
  title   = {A Lagrangian approach for the incompressible Navier–Stokes equations with variable density},
  journal = {Comm. Pure Appl. Math.},
  volume  = {65},
  year    = {2012},
  pages   = {1458--1480}
}

@article{DanchinMucha2019,
  author  = {R. Danchin and P. B. Mucha},
  title   = {The incompressible Navier–Stokes equations in vacuum},
  journal = {Comm. Pure Appl. Math.},
  volume  = {72},
  year    = {2019},
  pages   = {1351--1385}
}

@article{GancedoGarcia2018,
  author  = {F. Gancedo and E. García-Juárez},
  title   = {Global regularity of 2D density patches for inhomogeneous Navier–Stokes},
  journal = {Arch. Ration. Mech. Anal.},
  volume  = {229},
  year    = {2018},
  pages   = {339--360}
}

@article{LionsPeetre1964,
  author = {J.-L. Lions and J. Peetre},
  title = {Sur une classe d'espaces d'interpolation},
  journal = {Publications Math{\'e}matiques de l'IH{\'E}S},
  volume = {19},
  year = {1964},
  pages = {5--68}
}

@article{PaicuZhangZhang2013,
  author  = {M. Paicu and P. Zhang and Z. Zhang},
  title   = {Global unique solvability of inhomogeneous Navier–Stokes equations with bounded density},
  journal = {Comm. Partial Differential Equations},
  volume  = {38},
  year    = {2013},
  pages   = {1208--1234}
}

@article{DipLi89,
  author  = {R. J. DiPerna and P.-L. Lions},
  title   = {Ordinary differential equations, transport theory and {S}obolev spaces},
  journal = {Invent. Math.},
  volume  = {98},
  year    = {1989},
  pages   = {511--547}
}

@article{Danchin2025,
  author  = {R. Danchin},
  title   = {Global well-posedness for two-dimensional inhomogeneous viscous flows with rough data via dynamic interpolation},
  journal = {Analysis \& PDE},
  volume  = {18},
  year    = {2025},
  pages   = {1231--1270}
}

@article{CrinBaratSkondricViolini2025,
  author    = {T. Crin-Barat and S. \v{S}kondri\'c and A. Violini},
  title     = {Relative energy method for weak–strong uniqueness of the inhomogeneous Navier–Stokes equations far from vacuum},
  journal   = {Journal of Evolution Equations},
  year      = {2025},
  volume    = {25},
  pages = {010001}
}

@article{HaoShaoWeiZhang2025,
  author  = {T. Hao and F. Shao and D. Wei and Z. Zhang},
  title   = {Global Well-Posedness of Inhomogeneous Navier--Stokes Equations with Bounded Density},
  journal = {International Mathematics Research Notices},
  volume  = {2025},
  year    = {2025},
  pages   = {rnaf283},
}

@article{HaoShaoWeiZhang2026,
  author  = {T. Hao and F. Shao and D. Wei and Z. Zhang},
  title   = {On the density patch problem for the 2-D inhomogeneous Navier--Stokes equations},
  journal = {Science China Mathematics},
  year    = {2026},
  doi     = {10.1007/s11425-025-2505-y}
}

@article{KinnunenMartio1997,
  author  = {J. Kinnunen and O. Martio},
  title   = {Hardy's Inequalities for Sobolev Functions},
  journal = {Mathematical Research Letters},
  volume  = {4},
  pages   = {489--500},
  year    = {1997}
}

@article{CaponeCruzUribeFiorenza2007,
  author  = {C. Capone and D. Cruz-Uribe and A. Fiorenza},
  title   = {The Fractional Maximal Operator and Fractional Integrals on Variable $L^p$ Spaces},
  journal = {Revista Matem{\'a}tica Iberoamericana},
  volume  = {23},
  pages   = {743--770},
  year    = {2007}
}

@article{CheminLerner1995,
  author  = {J.-Y. Chemin and N. Lerner},
  title   = {Flot de champs de vecteurs non-lipschitziens et {\'e}quations de Navier--Stokes},
  journal = {Journal of Differential Equations},
  volume  = {121},
  year    = {1995},
  pages   = {314--328},
}

@book{BCD2011,
  author    = {H. Bahouri and J.-Y. Chemin and R. Danchin},
  title     = {Fourier Analysis and Nonlinear Partial Differential Equations},
  publisher = {Springer},
  year      = {2011}
}

@book{Lions1996,
  author    = {P.-L. Lions},
  title     = {Mathematical Topics in Fluid Mechanics. Vol. 1. Incompressible Models},
  publisher = {Oxford University Press},
  year      = {1996}
}

@misc{GancedoGarciaJuarezLunaVelasco2025,
  author = {F. Gancedo and E. García-Juárez and P. Luna-Velasco},
  title  = {On 2D Navier--Stokes free boundary: nonnegative density and small viscosity contrast},
  note   = {arXiv:2507.09333},
  url    = {https://arxiv.org/abs/2507.09333},
  year   = {2025}
}

@misc{PrangeTan2023,
  author = {C. Prange and J. Tan},
  title  = {Free boundary regularity of vacuum states for incompressible viscous flows in unbounded domains},
  note   = {arXiv:2310.09288},
  url    = {https://arxiv.org/abs/2310.09288},
  year   = {2023}
}

@misc{Skondric2025,
  author = {S. \v{S}kondri\'c},
  title  = {Existence and uniqueness of Leray-Hopf weak solution for the inhomogeneous 2D Navier--Stokes equations without vacuum},
  note   = {arXiv:2504.16638},
  url    = {https://arxiv.org/abs/2504.16638},
  year   = {2025}
}

@misc{SkondricViolini2026,
  author = {S. {\v{S}}kondri{\'c} and A. Violini},
  title  = {On Lions' density patch problem at a critical level of regularity},
  note   = {arXiv:2604.16017},
  url    = {https://arxiv.org/abs/2604.16017},
  year   = {2026}
}

\end{document}